\newtheorem{theorem}{Theorem}
\newtheorem{corollary}[theorem]{Corollary}
\newtheorem{conj}[theorem]{Conjecture}
\newtheorem{lemma}[theorem]{Lemma}
\theoremstyle{definition}
\newtheorem{defn}[theorem]{Definition}
\newtheorem{rem}{Remark}
\numberwithin{equation}{section}
\numberwithin{theorem}{section}
 \numberwithin{rem}{section}
\begin{document}

\title[Arithmetic Properties of Odd Ranks and $k$-Marked Odd Durfee Symbols]
 {Arithmetic Properties of Odd Ranks and $k$-Marked Odd Durfee Symbols}

\author{Liuquan Wang}

\address{School of Mathematics and Statistics, Wuhan University, Wuhan 430072, Hubei, P.R. China}
\email{wanglq@whu.edu.cn;mathlqwang@163.com}

\subjclass[2010]{Primary 05A17, 11P83; Secondary 05A19, 11B65, 11P84}

\keywords{Partitions;  odd ranks; $k$-marked odd Durfee symbols; mock theta functions}

\dedicatory{}

\begin{abstract}
Let $N^{0}(m,n)$ be the number of odd Durfee symbols of $n$ with odd rank $m$, and $N^{0}(a,M;n)$ be the number of odd Durfee symbols of $n$ with odd rank congruent to $a$ modulo $M$.  We  give explicit formulas for the generating functions of $N^{0}(a,M;n)$ and their $\ell$-dissections where $0\le a \le M-1$ and $M, \ell \in  \{2, 4, 8\}$. From these formulas, we obtain some interesting arithmetic properties of $N^{0}(a,M;n)$. Furthermore, let $\mathcal{D}_{k}^{0}(n)$ denote the number of $k$-marked odd Durfee symbols of $n$.  Andrews (2007) conjectured that $\mathcal{D}_{2}^{0}(n)$ is even if $n\equiv 4$ or 6 (mod 8) and $\mathcal{D}_{3}^{0}(n)$ is even if $n\equiv 1, 9, 11$ or 13 (mod 16). Using our results on odd ranks, we prove Andrews' conjectures.
\end{abstract}

\maketitle
\section{Introduction and Main Results}

Given a positive integer $n$, a partition $\alpha=(a_1, a_2, \dots, a_r)$  of $n$ is a non-increasing sequence of positive integers that add up to $n$. The $a_i$'s are called the parts of $\alpha$. We denote by $l(\alpha):=r$ the number of parts in $\alpha$ and call $|\alpha|:=a_1+a_2+\cdots +a_r$ the weight of $\alpha$.  As usual, let $p(n)$ denote the number of partitions of $n$ and we define $p(0)=1$ for convention. Its generating function satisfies
\begin{align}\label{p-gen}
\sum_{n=0}^{\infty}p(n)q^n=\frac{1}{(q;q)_{\infty}},
\end{align}
where
\begin{align}
(a;q)_{n}:=\prod\limits_{k=0}^{n-1}(1-aq^{k})
\end{align}
and
\begin{align}
(a;q)_{\infty}:=\lim\limits_{n\rightarrow \infty}(a;q)_{n}, \quad |q|<1.
\end{align}

Ramanujan \cite{Ramanujan-1919,Ramanujan-1920} discovered the following beautiful congruences:
\begin{align}
p(5n+4)&\equiv 0 \pmod{5}, \label{pn-mod5}\\
p(7n+5)&\equiv 0 \pmod{7}, \label{pn-mod7} \\
p(11n+6)&\equiv 0 \pmod{11}. \label{pn-mod11}
\end{align}
These congruences are now  known as the Ramanujan congruences. Since their appearance, there have been numerous studies concerning arithmetic properties of combinatorial functions. These functions usually appear as coefficients for some $q$-series arising from combinatorics. There is no general method for proving congruences for such $q$-series, except in some cases such as  when the $q$-series is closely related to modular forms. In most cases, we need clever $q$-series manipulations as well as $q$-series identities. For more introduction on this background, see  \cite{Andrews-book} or \cite{Berndt}.

In order to explain Ramanujan's congruences \eqref{pn-mod5}-\eqref{pn-mod11} combinatorially, Dyson \cite{Dyson} introduced a statistic called the  rank. The rank of a partition is defined as its largest part minus the number of parts. Let $N(m,n)$ denote the number of partitions of $n$ with rank $m$, and  let $N(a,M;n)$ denote the number of partitions of $n$ with rank $\equiv a$ (mod $M$). It is well known that
\begin{align}
R_{1}(z;q)&:= 1+\sum_{m=-\infty}^{\infty}\sum_{n=1}^{\infty}N(m,n)z^mq^n \nonumber \\
&=\sum_{n=0}^{\infty}\frac{q^{n^2}}{(zq;q)_{n}(z^{-1}q;q)_{n}}.\label{rank-gen}
\end{align}
 Dyson \cite{Dyson} conjectured that for $(r,M)\in \{(4,5), (5,7) \}$, $0\le a \le M-1$ and $n\ge 0$,
\begin{align}
N(a,M;Mn+r)=\frac{1}{M}p(Mn+r). \label{N-mod57}
\end{align}
Atkin and Swinnerton-Dyer \cite{AS-D} proved \eqref{N-mod57} by studying the generating functions of $N(a,M;n)$ with $M=5$ or 7. The arithmetic relations in \eqref{N-mod57} gave a satisfactory explanation to the congruences \eqref{pn-mod5} and \eqref{pn-mod7}. To give a similar combinatorial interpretation to the congruence \eqref{pn-mod11}, another combinatorial quantity, the crank, was hypothesized by Dyson \cite{Dyson} and finally
found explicitly by Andrews and Garvan \cite{Andrews-Garvan}.

Around 2003, Atkin and Garvan \cite{Atkin-Garvan} considered the $k$-th moment of the rank which is defined as
\begin{align}\label{k-th-moment}
N_{k}(n):=\sum_{m=-\infty}^{\infty}m^{k}N(m,n).
\end{align}
Since $N(-m,n)=N(m,n)$, all the odd order moments are zero.  Andrews \cite{Andrews} discovered that there is a rich combinatorial and enumerative structure associated with the moments of ranks. He considered a symmetrized $k$-th moment function (see \cite[Eq.\ (1.13)]{Andrews})
\begin{align}\label{eta-k-rank}
\eta_{k}(n):=\sum_{m=-\infty}^{\infty}\binom{m+\lfloor \frac{k-1}{2}\rfloor}{k}N(m,n).
\end{align}
Again, it is not difficult to see that $\eta_{2k+1}(n)=0$ (see \cite[Theorem 1]{Andrews}). By introducing the concepts of Durfee symbols and $k$-marked Durfee symbols, Andrews gave a combinatorial interpretation for $\eta_{2k}(n)$ by showing that $\eta_{2k}(n)$ equals the number of $(k+1)$-marked Durfee symbols of $n$.  For our purpose, we will not discuss Durfee symbols and $k$-marked Durfee symbols.

In \cite{Andrews} Andrews also introduced odd Durfee symbols and $k$-marked odd Durfee symbols. The motivation for Andrews to introduce odd Durfee symbols is to give a natural combinatorial explanation to an identity associated with Watson's third order mock theta function $\omega(q)$ \cite{Watson}, which is defined as
\begin{align}
\omega(q):=\sum_{n=0}^{\infty}\frac{q^{2n(n+1)}}{(q;q^2)_{n+1}^2}. \label{omega-defn}
\end{align}
Fine \cite[Eq.\ (26.84)]{Fine} discovered that $\omega(q)$ satisfies
\begin{align}
q\omega(q)=\sum_{n=0}^{\infty}\frac{q^{n+1}}{(q;q^2)_{n+1}}. \label{Fine-formula}
\end{align}
Here, the right-hand side of \eqref{Fine-formula} is the generating function for $p_{\omega}(n)$, the number of partitions of $n$ wherein at least all but one instance of the largest part is one of a pair of consecutive non-negative integers.
By utilizing MacMahon's modular partitions with modulus 2, Andrews showed that each partition enumerated by $p_{\omega}(n)$ has associated with it an odd Durfee symbol of $n$.
\begin{defn}\label{odd-durfee-defn}
An \textit{odd Durfee symbol of $n$} is a two-rowed array with a subscript of the form
\begin{align*}
\begin{pmatrix}
a_1 & a_2 & \cdots & a_s \\
b_1 &b_2 &\cdots & b_t
\end{pmatrix}_{D}
\end{align*}
wherein all entries are odd numbers such that
\begin{enumerate}
\item $2D+1\ge a_1 \ge a_2 \ge \cdots \ge a_s >0$; \label{odd-defn-1}
\item $2D+1 \ge b_1 \ge b_2 \ge \cdots \ge b_t>0$; and  \label{odd-defn-2}
\item $n=\sum_{i=1}^{s}a_{i}+\sum_{j=1}^{t}b_{j}+2D^2+2D+1$. \label{odd-defn-3}
\end{enumerate}
The odd rank of an odd Durfee symbol is the number of entries in the top row minus the number of entries in the bottom row.
\end{defn}
Definition \ref{odd-durfee-defn} is an equivalent version of Andrews's  definition \cite[pp.\ 62-63]{Andrews} and is due to Ji \cite[Definition 1.2]{Ji}.

Let $N^{0}(m,n)$ denote the number of odd Durfee symbols of $n$ with odd rank $m$. By interchanging the rows of the symbol, it is clear that
\begin{align}\label{N-odd-even}
N^{0}(m,n)=N^{0}(-m,n).
\end{align}
Andrews \cite[Sec.\ 8]{Andrews} proved that
\begin{align}
p_{\omega}(n)=\sum_{m=-\infty}^{\infty}N^{0}(m,n).
\end{align}
and  \cite[Eq.\ (8.3)]{Andrews}
\begin{align}
R_{1}^{0}(z;q):=&\sum_{n=1}^{\infty}\sum_{m=-\infty}^{\infty}N^{0}(m,n)z^mq^n \nonumber\\
=&\sum_{n=0}^{\infty}\frac{q^{2n(n+1)+1}}{(zq;q^2)_{n+1}(z^{-1}q;q^2)_{n+1}}. \label{odd-rank-gen}
\end{align}
Clearly, if we let $z=1$ in \eqref{odd-rank-gen}, we recover Fine's formula \eqref{Fine-formula}.

Recently, Andrews, Dixit and Yee \ \cite{ADY} showed that $p_{\omega}(n)$ also counts the number of partitions of $n$ in which all odd parts are less than twice the smallest part. Besides its rich combinatorial meanings, the function $p_{\omega}(n)$ also satisfies many interesting congruences. Using the theory of modular forms, Waldherr \cite[Theorem 1.1]{Waldherr} gave the first explicit example of congruences satisfied by $p_{\omega}(n)$. Together with several other congruences, he proved that for any integer $n\ge 0$,
\begin{align}
p_{\omega}(40n+r)\equiv & 0 \pmod{5}, \quad r\in \{28,36\}. \label{p-omega-mod5}
\end{align}
An elementary proof of \eqref{p-omega-mod5} was given by Andrews, Passary, Sellers and Yee \cite{APSY}. They also established the following congruences:
\begin{align}
p_{\omega}(8n+4) &\equiv 0 \pmod{4}, \label{p-omega-1}\\
p_{\omega}(8n+6) &\equiv 0 \pmod{8}, \quad \text{\rm{and}}\label{p-omega-2} \\
 p_{\omega}(16n+13) &\equiv 0 \pmod{4}. \label{p-omega-3}
\end{align}
For more congruences satisfied by $p_{\omega}(n)$, we refer the reader to \cite{ADY,APSY,Waldherr,Wang}.
Clearly, the roles of the odd rank $N^{0}(m,n)$ and $p_{\omega}(n)$ are similar to the roles of the rank $N(m,n)$ and $p(n)$, and knowing the properties of odd rank will be helpful to understand $p_{\omega}(n)$.


Similar to \eqref{eta-k-rank}, Andrews \cite[p.\ 63]{Andrews} considered a symmetrized $k$-th moment function
\begin{align}\label{eta-defn}
\eta_{k}^{0}(n):=\sum_{m=-\infty}^{\infty}\binom{m+\lfloor \frac{k}{2}\rfloor}{k}N^{0}(m,n).
\end{align}
Since $N^{0}(m,n)=N^{0}(-m,n)$, we have $\eta_{2k+1}^{0}(n)=0$ \cite[p.\ 63]{Andrews}.
Using Watson's first identity on page 66 of \cite{Watson}, we have \cite[Eq.\ (8.5)]{Andrews}
\begin{align}\label{R-defn}
R_{1}^{0}(z;q)=\frac{1}{(q^2;q^2)_{\infty}}\sum_{n=-\infty}^{\infty}\frac{(-1)^nq^{3n^2+3n+1}}{1-zq^{2n+1}}.
\end{align}
Using \eqref{R-defn}, Andrews deduced that \cite[Theorem 21]{Andrews}
\begin{align}
\sum_{n=1}^{\infty}\eta_{2k}^{0}(n)q^n=\frac{1}{(q^2;q^2)_{\infty}}\sum_{n=-\infty}^{\infty}\frac{(-1)^nq^{3n^2+(2k+3)n+k+1}}{(1-q^{2n+1})^{2k+1}}.
\end{align}

To give a combinatorial explanation of \eqref{eta-defn}, Andrews introduced $k$-marked odd Durfee symbols. See \cite[p.\ 62]{Andrews} or \cite[Definition 4.3]{Ji} for definition.
Let $\mathcal{D}_{k}^{0}(n)$ denote the number of $k$-marked odd Durfee symbols of $n$.  Andrews \cite[Corollary 29]{Andrews} proved that for $k\ge 0$,
\begin{align}\label{D-eta}
\mathcal{D}_{k+1}^{0}(n)=\eta_{2k}^{0}(n).
\end{align}
Andrews also investigated the parity of $\mathcal{D}_{k}^{0}(n)$. He proved that for each $k\ge 1$, if $n\equiv k-1$ (mod 2), then $\mathcal{D}_{k}^{0}(n)$ is even. Andrews \cite[Conjectures A and B]{Andrews} then proposed the following conjectures.
\begin{conj}\label{conj-1}
$\mathcal{D}_{2}^{0}(n)$ is even if $n\equiv 4$ or $6$ \text{\rm{(mod 8)}}.
\end{conj}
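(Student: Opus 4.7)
The plan is to read $\mathcal{D}_2^0(n)$ as (essentially) the second moment of the odd rank, then exploit the parity-vanishing of Theorem~\ref{rank-odd}(1) to collapse this moment modulo $2$ onto $p_\omega(n)/2$. Once that reduction is in hand, Conjecture~\ref{conj-1} will drop out of the mod $4$ and mod $8$ congruences for $p_\omega$ recorded in \eqref{p-omega-1}--\eqref{p-omega-2}.

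First, by \eqref{D-eta} with $k=1$ one has $\mathcal{D}_2^0(n)=\eta_2^0(n)$. Starting from the defining sum \eqref{eta-defn} and pairing $m$ with $-m$ via \eqref{N-odd-even}, together with the elementary identity $\binom{m+1}{2}+\binom{1-m}{2}=m^2$, I would rewrite
\begin{align*}
\mathcal{D}_2^0(n) = \eta_2^0(n) = \sum_{m=1}^{\infty} m^2\, N^0(m,n).
\end{align*}
Now I restrict to even $n$, which covers both residue classes $4$ and $6 \pmod 8$. By Theorem~\ref{rank-odd}(1), $N^0(m,n)=0$ whenever $m$ is even (in particular $N^0(0,n)=0$), so only odd $m\ge 1$ contribute; since $m^2$ is odd for such $m$,
\begin{align*}
\mathcal{D}_2^0(n) \equiv \sum_{m=1}^{\infty} N^0(m,n) \pmod 2.
\end{align*}
Using $N^0(0,n)=0$ once more together with the symmetry \eqref{N-odd-even}, we have $p_\omega(n)=\sum_{m=-\infty}^{\infty}N^0(m,n)=2\sum_{m\ge 1}N^0(m,n)$, hence
\begin{align*}
\mathcal{D}_2^0(n) \equiv \frac{p_\omega(n)}{2} \pmod 2.
\end{align*}

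Finally, I would invoke \eqref{p-omega-1} and \eqref{p-omega-2}: $p_\omega(8n+4)\equiv 0 \pmod 4$ and $p_\omega(8n+6)\equiv 0 \pmod 8$. Both force $p_\omega(n)/2$ to be even in the respective residue classes, and so $\mathcal{D}_2^0(n)$ is even when $n\equiv 4$ or $6\pmod 8$, proving the conjecture.

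There is really no hard step here: the heavy lifting has been packaged into Theorem~\ref{rank-odd}(1) (which expresses odd ranks in terms of ordinary ranks) and into the cited $p_\omega$ congruences of Andrews--Passary--Sellers--Yee. The conceptual content is that, for even $n$, the vanishing of $N^0(m,n)$ on even $m$ makes the weight $m^2$ odd on the entire support of $N^0(\cdot,n)$, so the second moment modulo $2$ coincides with the unweighted count, which is exactly $p_\omega(n)/2$.
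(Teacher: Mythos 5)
Your proposal is correct, and both your reduction steps check out: the pairing identity $\binom{m+1}{2}+\binom{1-m}{2}=m^2$ is valid, the vanishing $N^{0}(m,n)=0$ for even $n$ and even $m$ is exactly Theorem \ref{rank-odd}(1), and the resulting congruence $\mathcal{D}_2^0(n)\equiv p_\omega(n)/2 \pmod 2$ for even $n$ is a correct consequence of $p_\omega(n)=\sum_m N^0(m,n)$ together with the symmetry \eqref{N-odd-even}.

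The route differs from the paper's in the finishing step, and the difference is worth spelling out. The paper reduces the moment mod $2$ by a parity analysis of the binomial coefficient ($\binom{m+1}{2}$ is odd exactly when $m\equiv 1,2 \pmod 4$), obtaining $\mathcal{D}_2^0(n)\equiv N^0(1,4;n)+N^0(2,4;n)\pmod 2$, hence $\mathcal{D}_2^0(2n)\equiv N^0(1,4;2n)\pmod 2$; this is equivalent to your reduction, since for even $n$ one has $p_\omega(n)/2=N^0(1,4;n)$ (the paper records this as \eqref{fact}). But the paper then finishes with its own congruences \eqref{N14-8n4-cong} and \eqref{N14-8n6-cong}, which come from the explicit generating functions \eqref{N14-8n4} and \eqref{N14-8n6} established via the dissection machinery of Section \ref{sec-modulo}; the whole proof is therefore self-contained. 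You instead cite the Andrews--Passary--Sellers--Yee congruences \eqref{p-omega-1}--\eqref{p-omega-2} as external input. This is logically sound --- those congruences have independent elementary proofs in the literature, so there is no circularity --- and it makes your argument shorter, needing nothing from the paper beyond Theorem \ref{rank-odd}(1) and Andrews' identities \eqref{D-eta}, \eqref{eta-defn}. What it costs is precisely what the paper's approach buys: within this paper, \eqref{p-omega-1}--\eqref{p-omega-2} are themselves re-derived as corollaries of the rank congruences in Theorem \ref{cong-thm} (see the remark after its proof), so the paper's route explains the $p_\omega$ congruences by the finer odd-rank statistics rather than assuming them, and yields the stronger refinements (e.g.\ $N^0(1,4;8n+6)\equiv 0 \pmod 4$) along the way.
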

\begin{conj}\label{conj-2}
$\mathcal{D}_{3}^{0}(n)$ is even if $n\equiv 1, 9, 11$ or $13$ \text{\rm{(mod 16)}}.
\end{conj}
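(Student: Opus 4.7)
The plan is to reduce each conjecture, via the symmetrized-moment identity $\mathcal{D}_{k+1}^0(n) = \eta_{2k}^0(n)$ of \eqref{D-eta}, to a congruence on a counting function that has already been studied. First I would pair $m$ with $-m$ in \eqref{eta-defn} and apply the reflection $N^0(m,n) = N^0(-m,n)$ from \eqref{N-odd-even}; this replaces each binomial weight by its even part and gives
\begin{align*}
\eta_2^0(n) = \frac{1}{2}\sum_{m} m^2 N^0(m,n), \qquad \eta_4^0(n) = \frac{1}{24}\sum_{m} m^2(m^2-1) N^0(m,n).
\end{align*}
I then control the parity of each sum using the vanishing in Theorem \ref{rank-odd}(1): $N^0(m,n) = 0$ whenever $m \equiv n \pmod{2}$.

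For Conjecture \ref{conj-1} the residues $n \equiv 4, 6 \pmod{8}$ are even, so only odd $m$ contribute, and $m^2 \equiv 1 \pmod{8}$ for every odd $m$. Hence
\begin{align*}
2 \mathcal{D}_2^0(n) = \sum_{m\ \mathrm{odd}} m^2 N^0(m,n) \equiv \sum_{m\ \mathrm{odd}} N^0(m,n) = p_\omega(n) \pmod{8},
\end{align*}
so $\mathcal{D}_2^0(n)$ is automatically even as soon as $p_\omega(n) \equiv 0 \pmod{4}$. The Andrews--Passary--Sellers--Yee congruences \eqref{p-omega-1} and \eqref{p-omega-2} supply exactly this on the two relevant progressions, and Conjecture \ref{conj-1} follows immediately.

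For Conjecture \ref{conj-2} the residues $n \equiv 1, 9, 11, 13 \pmod{16}$ are odd, so only $m = 2j$ survives and
\begin{align*}
\mathcal{D}_3^0(n) = \frac{1}{6}\sum_{j} j^2(4j^2-1) N^0(2j,n) = \frac{1}{3} \sum_{j > 0} j^2(4j^2-1) N^0(2j,n).
\end{align*}
Because $(j-1) j (j+1) \equiv 0 \pmod{3}$, the factor $j^2(4j^2-1)$ is always $3$-divisible, and a short check yields $j^2(4j^2-1)/3 \equiv j \pmod{2}$. Therefore
\begin{align*}
\mathcal{D}_3^0(n) \equiv \sum_{j > 0,\ j\ \mathrm{odd}} N^0(2j,n) = \tfrac{1}{2} N^0(2,4;n) \pmod{2},
\end{align*}
so Conjecture \ref{conj-2} is equivalent to the congruence $N^0(2,4;n) \equiv 0 \pmod{4}$ for $n \equiv 1, 9, 11, 13 \pmod{16}$. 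The main obstacle is this last congruence: I would isolate the generating function $\sum_n N^0(2, 4; n) q^n$ from $R_1^0(z;q)$ via the standard roots-of-unity filter $\tfrac{1}{4} \sum_{a=0}^{3} i^{-2a} R_1^0(i^a;q)$ and then compute its $8$-dissection, which is precisely one of the dissections the abstract promises. Verifying that the four coefficient streams indexed by the target residues modulo $16$ are each divisible by $4$ will be the technical heart of the argument, and should fall out of the theta-function identities underlying those dissections together with the odd-rank/ordinary-rank bridge supplied by Theorem \ref{rank-odd}.
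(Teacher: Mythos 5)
Your reduction of the conjecture is correct, and it lands exactly where the paper's own reduction does: your pairing argument gives $\mathcal{D}_3^0(n)\equiv\sum_{j>0,\ j\ \mathrm{odd}}N^0(2j,n)=\tfrac12 N^0(2,4;n)=N^0(2,8;n)\pmod 2$, which is precisely the paper's congruence $\mathcal{D}_3^0(n)\equiv N^0(2,8;n)\pmod 2$ (the paper reaches it by noting that $\binom{m+2}{4}$ is odd exactly when $m\equiv 2,3,4,5\pmod 8$, then cancelling $N^0(3,8;n)=N^0(5,8;n)$ and invoking \eqref{Nk-2k-cong}, whereas you use the parity vanishing of Theorem \ref{rank-odd}(1); both routes are valid). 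Your intermediate identities $\eta_4^0(n)=\tfrac{1}{24}\sum_m m^2(m^2-1)N^0(m,n)$ and $j^2(4j^2-1)/3\equiv j\pmod 2$ check out.

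The genuine gap is the step you explicitly defer: the congruence $N^0(2,4;n)\equiv 0\pmod 4$ for $n\equiv 1,9,11,13\pmod{16}$ is never proved, only announced as something that ``should fall out'' of an $8$-dissection. That dissection is not a routine verification; it is the technical core of the paper. One must express $\tfrac14\sum_{j}i^{-2j}R_1^0(i^j;q)$ through the universal mock theta function $g(x;q)$, apply the Appell--Lerch identities of Lemmas \ref{g-lemma} and \ref{g-sum-minus} (in the form \eqref{q-sum}, \eqref{q-minus}, \eqref{iq-sum}) together with the $2$-dissections of Lemma \ref{2-dissection}, and carry out the nontrivial cancellations that produce the closed forms \eqref{N24-8n1}, \eqref{N24-8n3}, \eqref{N24-8n5}. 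Moreover, even granting those formulas, your target congruence does not follow immediately for the residues $11$ and $13$ modulo $16$: the generating functions $2J_2^6J_4^5/(J_1^8J_8^2)$ and $2J_2^{14}J_8^2/(J_1^{10}J_4^5)$ are only manifestly divisible by $2$, and one still needs a binomial-theorem reduction modulo $2$ (e.g.\ $J_1^8\equiv J_2^4$ and $J_1^{10}\equiv J_2^5\pmod 2$) to see that these series are congruent to series in $q^2$, so that their odd-indexed coefficients are even --- this is exactly how the paper obtains \eqref{N24-16n13-cong} and its companions \eqref{N28-16n11-cong}, \eqref{N28-16n13-cong}. As it stands, your proposal establishes a correct equivalence plus a plan, not a proof of the conjecture.
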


One of the main goals of this paper is to confirm the above conjectures. To prove these results, via \eqref{D-eta} it suffices to prove the corresponding congruences for $\eta_{2}^{0}(n)$ and $\eta_{4}^{0}(n)$. This observation will eventually turn our attention to the arithmetic properties of $N^{0}(a,M;n)$ with $0\le a \le M-1$ and $M\in \{4,8\}$ after analyzing \eqref{eta-defn}, where $N^{0}(a,M;n)$ denotes the number of odd Durfee symbols of $n$ with odd rank congruent to $a$ modulo $M$. Hence, we need to study the odd ranks modulo 4 and 8.
Santa-Gadea and Lewis \cite{Lewis} proved a number of results on ranks and cranks modulo 4 and 8. Recently, Andrews, Berndt, Chan, Kim and Malik \cite{ABCKM} found some results on the ranks modulo 4 and 8. For example, they showed that \cite[(7.5), (7.6)]{ABCKM}
\begin{align}
N(0,4;2n)-N(2,4;2n)=&(-1)^n\left(N(0,8;2n)-N(4,8;2n) \right), \label{Chan-1} \\
N(0,4;2n+1)-N(2,4;2n+1)=&(-1)^n\Big( N(0,8;2n+1)+N(1,8;2n+1) \nonumber \\
&-2N(3,8;2n+1)-N(4,8;2n+1)\Big). \label{Chan-2}
\end{align}
Later Mortenson \cite{Mortenson} used different methods to prove these results and obtained generating functions for $N(a,M;n)-p(n)/M$ where $M\in \{4,8\}$. Motivated by their works, we are able to give explicit formulas for generating functions associated with $N^{0}(a,M;\ell n+r)$, where $0\le a \le M-1$, $0\le r \le \ell-1$ and $M, \ell \in \{2,4,8\}$  (see Theorems \ref{thm-mod2}--\ref{thm-mod8}). Using these generating functions, we prove some interesting arithmetic relations analogous to \eqref{N-mod57}, \eqref{Chan-1} and \eqref{Chan-2}.
\begin{theorem}\label{N-relation}
For any integer $n\ge 0$ we have
\begin{align}
N^{0}(0,8;8n+r)&=N^{0}(4,8;8n+r), \quad r\in \{5,7\}, \label{N08-48-8n57} \\
N^{0}(1,8;8n+r)&=N^{0}(3,8;8n+r), \quad r \in \{4,6\}. \label{N18-38-8n46}
\end{align}
\end{theorem}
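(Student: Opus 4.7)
The strategy is to show that the generating functions of both sides of each identity agree. Set
$$G_{0,4}(q) := \sum_{n=1}^{\infty}\bigl(N^{0}(0,8;n) - N^{0}(4,8;n)\bigr)q^n, \qquad G_{1,3}(q) := \sum_{n=1}^{\infty}\bigl(N^{0}(1,8;n) - N^{0}(3,8;n)\bigr)q^n;$$
the goal is then to show the coefficient of $q^{8n+r}$ in $G_{0,4}$ vanishes for $r \in \{5,7\}$, and the coefficient of $q^{8n+r}$ in $G_{1,3}$ vanishes for $r \in \{4,6\}$.

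First I would use orthogonality of characters on $\mathbf{Z}/8\mathbf{Z}$ together with the symmetry $N^{0}(-m,n) = N^{0}(m,n)$ recorded in \eqref{N-odd-even} to express $G_{0,4}$ and $G_{1,3}$ as explicit linear combinations of $R_{1}^{0}(\zeta;q)$ evaluated at the primitive $8$th roots of unity $\zeta = e^{\pm i\pi/4}$ and $\zeta = e^{\pm 3i\pi/4}$. Substituting the Lambert-type formula \eqref{R-defn} for $R_{1}^{0}(z;q)$ and pairing up conjugate roots via partial fractions converts each $G$ into a finite sum of series roughly of the shape
$$\frac{1}{(q^2;q^2)_{\infty}}\sum_{n\in\mathbf{Z}}\frac{(-1)^{n}\,A(n,q)\,q^{3n^{2}+3n+1}}{B(n,q)},$$
where the rational factor $A/B$ depends on $n$ only through its residue modulo $8$.

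The key computational step is then an $8$-dissection: split the summation index $n$ into residue classes mod $8$, reindex, and recognize each inner $q$-series as a quotient of theta functions via the Jacobi triple product. This is exactly what the paper carries out to establish Theorems \ref{thm-mod2}--\ref{thm-mod8}, which give explicit formulas for $\sum_{n\geq 0}N^{0}(a,8;8n+r)q^{n}$. Once those dissection formulas are available, the assertions \eqref{N08-48-8n57} and \eqref{N18-38-8n46} reduce to direct comparisons: for $r\in\{5,7\}$ the formulas for $(a,r)=(0,r)$ and $(4,r)$ coincide term by term, and similarly the formulas for $(1,r)$ and $(3,r)$ agree when $r\in\{4,6\}$.

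The main obstacle is the dissection itself, which requires careful bookkeeping of the residue of $3n^{2}+3n+1$ modulo $8$ and a nontrivial cancellation among the resulting theta series; indeed, the very reason only these specific residues $r$ produce equalities (and not others) would emerge as a byproduct of that dissection. As a complementary and potentially simpler route worth attempting first, I would use Theorem \ref{rank-odd} to rewrite $N^{0}(a,8;n)$ as a linear combination of ordinary rank counts $N\bigl(b,(n-a-1)/2\bigr)$ and try to deduce the four identities from the rank-mod-$8$ results of Santa-Gadea--Lewis, Andrews--Berndt--Chan--Kim--Malik (e.g.\ \eqref{Chan-1}--\eqref{Chan-2}), and Mortenson; this may shortcut parts of the dissection but seems unlikely to handle all four congruences uniformly, so the root-of-unity plus $8$-dissection plan above is what I would ultimately execute.
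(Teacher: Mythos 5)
Your proposal is correct in outline and, at the level of this theorem, is essentially the paper's proof: the paper also reduces \eqref{N08-48-8n57} and \eqref{N18-38-8n46} to the explicit $8$-dissection generating functions and then simply observes that \eqref{N08-8n5} coincides with \eqref{N48-8n5}, \eqref{N08-8n7} with \eqref{N48-8n7}, \eqref{N18-8n4} with \eqref{N38-8n4}, and \eqref{N18-8n6} with \eqref{N38-8n6}. Where you diverge is in how you propose to obtain those dissection formulas. You would start from the Lambert-series representation \eqref{R-defn} and carry out an Atkin--Swinnerton-Dyer style dissection of the summation index modulo $8$, tracking $3n^2+3n+1 \pmod 8$; the paper instead never touches \eqref{R-defn} in this part. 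It uses $R_{1}^{0}(z;q)=qg(zq;q^2)$ from \eqref{g-R}, feeds the roots-of-unity combinations into Ramanujan's transformation formulas for the universal mock theta function $g$ (Lemmas \ref{g-lemma} and \ref{g-sum-minus}, giving \eqref{q-sum}--\eqref{izetaq-minus}), and then iterates the $2$-dissections of Lemma \ref{2-dissection} three times, rather than performing a single mod-$8$ splitting of a Hecke-type sum. Both routes work; yours is the more classical rank-difference technique, while the paper's buys each step as a direct substitution into ready-made identities and keeps the mock components visible as explicit $g(\cdot;\cdot)$ terms throughout.

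One caution about your key computational step: it is not true that every piece of the dissection can be "recognized as a quotient of theta functions via the Jacobi triple product." The individual progressions genuinely contain mock pieces --- e.g.\ \eqref{N08-8n3} involves $g(-q^8;q^{16})$ and \eqref{N18-8n} involves $g(-1;q)$ and $g(q;q^4)$ --- so in your framework some inner series must remain Appell--Lerch sums. The theorem holds precisely because, for the residues $r$ in question, these mock parts are absent (or cancel identically) from both sides being compared; your plan therefore needs the additional observation that the vanishing of the difference at $8n+r$ forces a cancellation of Appell--Lerch terms, not merely a theta-function identity. This is exactly the "nontrivial cancellation" you flag as the main obstacle, so the gap is one of execution rather than of conception, and your fallback via Theorem \ref{rank-odd} is, as you suspect, not how the paper proceeds.
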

Theorem \ref{N-relation} leads to the following corollary.
\begin{corollary}\label{cor-N-relation}
For any integer $n\ge 0$ we have
\begin{align}
N^{0}(0,4;8n+r)&=2N^{0}(0,8;8n+r), \quad r\in \{5,7\}, \label{N04-08-8n57} \\
N^{0}(1,4;8n+r)&=2N^{0}(1,8;8n+r), \quad r\in \{4,6\}. \label{N14-18-8n46}
\end{align}
\end{corollary}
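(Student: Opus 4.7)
The plan is to derive this corollary essentially as a bookkeeping consequence of Theorem \ref{N-relation}, combining the residue-class decomposition of counts modulo $4$ against counts modulo $8$ with the symmetry $N^{0}(m,n)=N^{0}(-m,n)$ from \eqref{N-odd-even}. No generating function manipulation is needed beyond what has already been done in Sections \ref{sec-pre} and \ref{sec-modulo}; the hard analytic work is already packaged in Theorem \ref{N-relation}.

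First I would unpack the definitions. Odd ranks congruent to $0$ modulo $4$ are exactly those congruent to $0$ or $4$ modulo $8$, and these two residue classes are disjoint, so
\begin{align*}
N^{0}(0,4;n)=N^{0}(0,8;n)+N^{0}(4,8;n).
\end{align*}
Applying the equality $N^{0}(0,8;8n+r)=N^{0}(4,8;8n+r)$ from \eqref{N08-48-8n57} for $r\in\{5,7\}$ then collapses the right-hand side to $2N^{0}(0,8;8n+r)$, which is \eqref{N04-08-8n57}.

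For the second identity, odd ranks congruent to $1$ modulo $4$ split into the residue classes $1$ and $5$ modulo $8$, so
\begin{align*}
N^{0}(1,4;n)=N^{0}(1,8;n)+N^{0}(5,8;n).
\end{align*}
Here I would invoke the symmetry \eqref{N-odd-even} in the form $N^{0}(5,8;n)=N^{0}(-5,8;n)=N^{0}(3,8;n)$, giving
\begin{align*}
N^{0}(1,4;n)=N^{0}(1,8;n)+N^{0}(3,8;n).
\end{align*}
Then the equality $N^{0}(1,8;8n+r)=N^{0}(3,8;8n+r)$ from \eqref{N18-38-8n46} for $r\in\{4,6\}$ yields \eqref{N14-18-8n46}.

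Since every step is a direct substitution once the residues are sorted correctly, there is really no obstacle; the only thing one needs to be careful about is the use of the symmetry $N^{0}(m,n)=N^{0}(-m,n)$ to convert $N^{0}(5,8;n)$ into $N^{0}(3,8;n)$, so that Theorem \ref{N-relation} can be applied in the second case. If desired, I could also read off \eqref{N04-08-8n57} and \eqref{N14-18-8n46} more explicitly by comparing the closed forms from Theorem \ref{thm-mod4} with those from Theorem \ref{thm-mod8}: for example, \eqref{N04-8n5} versus \eqref{N08-8n5} gives \eqref{N04-08-8n57} with $r=5$ directly, and \eqref{N14-8n4} versus \eqref{N18-8n4} gives \eqref{N14-18-8n46} with $r=4$. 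This alternative reading confirms the corollary without invoking Theorem \ref{N-relation}, but the residue-decomposition argument above is shorter and more conceptual.
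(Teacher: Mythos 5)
Your proposal is correct and follows essentially the same route as the paper: the paper likewise decomposes $N^{0}(0,4;8n+5)=N^{0}(0,8;8n+5)+N^{0}(4,8;8n+5)$ and applies Theorem \ref{N-relation}, handling the remaining cases (including the symmetry step $N^{0}(5,8;n)=N^{0}(3,8;n)$ that you spell out) ``in a similar fashion.'' Even your alternative argument, comparing \eqref{N04-8n5} with \eqref{N08-8n5} and so on, is exactly the paper's own remark following the corollary.
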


Meanwhile, we give simple formulas for the generating functions of certain odd rank differences. To state these formulas, let
$J_{m}:=(q^m;q^m)_{\infty}$ and we recall a universal mock theta function
\begin{align}\label{g-defn}
g(x;q):=x^{-1}\left(-1+\sum_{n=0}^{\infty}\frac{q^{n^2}}{(x;q)_{n+1}(q/x;q)_{n}} \right).
\end{align}
\begin{theorem}\label{rank-differences}
We have
\begin{align}
\sum_{n=0}^{\infty}\left(N^{0}(0,8;8n+1)-N^{0}(4,8;8n+1) \right)q^n=&\frac{J_{2}^4}{J_{1}^2J_{4}}, \label{N08-48-8n1-diff} \\
\sum_{n=0}^{\infty}\left(N^{0}(0,8;8n+3)-N^{0}(4,8;8n+3) \right)q^n=&\frac{J_{4}^3}{J_{2}^2}+qg(q^2;q^4), \label{N08-48-8n3-diff} \\
\sum_{n=0}^{\infty}\left(N^{0}(1,8;8n)-N^{0}(3,8;8n) \right)q^n=&qg(q;q^4), \label{N18-38-8n-diff} \\
\sum_{n=0}^{\infty}\left(N^{0}(1,8;8n+2)-N^{0}(3,8;8n+2) \right)q^n=&\frac{J_{2}J_{4}}{J_{1}}. \label{N18-38-8n2-diff}
\end{align}
\end{theorem}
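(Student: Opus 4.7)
The plan is to derive all four identities as direct consequences of the explicit dissection formulas for $\sum_{n \ge 0} N^{0}(a,8;8n+r)\,q^{n}$ furnished by the earlier Theorems \ref{thm-mod2}--\ref{thm-mod8}. By the symmetry \eqref{N-odd-even}, substituting $z = \zeta_{8}^{k}$ into the Lambert-series representation \eqref{R-defn} of $R_{1}^{0}(z;q)$ makes the four generating functions on the left-hand sides of \eqref{N08-48-8n1-diff}--\eqref{N18-38-8n2-diff} emerge as specific linear combinations of the two specialisations $R_{1}^{0}(\zeta_{8};q)$ and $R_{1}^{0}(\zeta_{8}^{3};q)$.

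First I would start from \eqref{R-defn}, set $z = \zeta_{8}^{k}$, and decompose the resulting Lambert series using the Atkin--Swinnerton-Dyer partial-fraction technique of \cite{AS-D}. Collecting terms by $n \bmod 8$ expresses the coefficient of $q^{8n+r}$ in each $R_{1}^{0}(\zeta_{8}^{k};q)$ as a combination of eta quotients in $J_{1}, J_{2}, J_{4}, J_{8}$ together with one or two evaluations of the universal mock theta function $g(x;q)$ from \eqref{g-defn}; these are exactly the closed forms recorded in Theorems \ref{thm-mod2}--\ref{thm-mod8}. To then obtain Theorem \ref{rank-differences}, I would subtract the formula for $\sum_{n} N^{0}(4,8;8n+r)\,q^{n}$ from that for $\sum_{n} N^{0}(0,8;8n+r)\,q^{n}$ when $r \in \{1,3\}$, and analogously for $N^{0}(1,8;8n+r) - N^{0}(3,8;8n+r)$ when $r \in \{0,2\}$. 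The bulk of the theta terms cancel by symmetry; in \eqref{N08-48-8n1-diff} and \eqref{N18-38-8n2-diff} the mock parts also drop out, leaving the clean right-hand sides $J_{2}^{4}/(J_{1}^{2}J_{4})$ and $J_{2}J_{4}/J_{1}$, while in \eqref{N18-38-8n-diff} and \eqref{N08-48-8n3-diff} the surviving mock contribution collapses onto a single evaluation of $g$ at $x = q$ or $x = q^{2}$ (with base $q^{4}$).

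The main obstacle is the book-keeping at the partial-fraction step for the $\zeta_{8}$-specialisations: one has to verify that the residual, non-cancelling Hecke-type pieces collapse precisely onto the terms $qg(q;q^{4})$ and $qg(q^{2};q^{4})$, and that the remaining theta quotients simplify to $J_{2}^{4}/(J_{1}^{2}J_{4})$, $J_{4}^{3}/J_{2}^{2}$, and $J_{2}J_{4}/J_{1}$ respectively. This identification is carried out by matching against an Appell--Lerch representation of $g(x;q)$ and invoking standard $2$- and $4$-dissections of $1/J_{1}$ together with Jacobi triple-product manipulations, in the spirit of the techniques used for ordinary rank differences modulo $8$ in \cite{ABCKM, Mortenson}.
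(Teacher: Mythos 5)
Your proposal is correct and is essentially the paper's own proof: all four identities are obtained by subtracting the mod-8 dissection formulas of Theorem \ref{thm-mod8} (\eqref{N48-8n1} from \eqref{N08-8n1}, \eqref{N38-8n} from \eqref{N18-8n}, \eqref{N38-8n2} from \eqref{N18-8n2}, and \eqref{N48-8n3} from \eqref{N08-8n3}), with only the $8n+3$ case requiring the extra step of collapsing the residual pieces $q^{-1}g(-1;q^{16})-q^{5}g(-q^{8};q^{16})$ into $qg(q^{2};q^{4})$ by means of Lemma \ref{g-lemma} and the $2$-dissection \eqref{J-2} --- exactly the ``obstacle'' you identify and propose to resolve with the same kind of tools. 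The only divergence, your sketch of re-deriving the Theorem \ref{thm-mod8} formulas via Atkin--Swinnerton-Dyer partial fractions applied to \eqref{R-defn} rather than the paper's route through \eqref{start} and the $g$-function lemmas, concerns the prerequisite theorems (which you in any case take as given) and not the proof of the present statement.
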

A consequence of this theorem is the following remarkable identity.
\begin{corollary}\label{N-p-omega}
For any integer $n\ge 1$ we have
\begin{align}\label{N-pomega-16n5}
N^{0}(0,8;16n-5)-N^{0}(4,8;16n-5)=p_{\omega}(n).
\end{align}
\end{corollary}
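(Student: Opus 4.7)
The plan is to extract the odd-indexed coefficients on both sides of identity \eqref{N08-48-8n3-diff} from Theorem \ref{rank-differences}. First I would note that
\begin{equation*}
\frac{J_{4}^{3}}{J_{2}^{2}} \;=\; \frac{(q^{4};q^{4})_{\infty}^{3}}{(q^{2};q^{2})_{\infty}^{2}}
\end{equation*}
is a power series in $q^{2}$, and therefore contributes only to even powers of $q$. Writing $8n+3=16m-5$ (equivalently $n=2m-1$) and reading off the coefficient of $q^{2m-1}$ in \eqref{N08-48-8n3-diff} thus gives, for every integer $m\ge 1$,
\begin{equation*}
N^{0}(0,8;16m-5)-N^{0}(4,8;16m-5) \;=\; [q^{2m-1}]\, q\,g(q^{2};q^{4}),
\end{equation*}
where $[q^{k}]F(q)$ denotes the coefficient of $q^{k}$ in $F(q)$. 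This reduces the corollary to identifying the odd part of $q\,g(q^{2};q^{4})$ as $q\,\omega(q^{2})$.

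For this, I would invoke the classical mock theta identity $g(q;q^{2})=\omega(q)$, which realises Ramanujan's third order mock theta function as the specialisation $(x,q)\mapsto(q,q^{2})$ of the universal mock theta function \eqref{g-defn}. This identity is standard in the mock theta literature (it can be found, for instance, in Gordon and McIntosh's survey) and can also be derived directly by a short $q$-series manipulation in the same Watson-type spirit as the transformation that yields \eqref{R-defn}. Replacing $q$ by $q^{2}$ in this identity gives $g(q^{2};q^{4})=\omega(q^{2})$, and then Fine's formula \eqref{Fine-formula-2nd} with $q\mapsto q^{2}$ produces
\begin{equation*}
q\,g(q^{2};q^{4}) \;=\; q\,\omega(q^{2}) \;=\; q^{-1}\sum_{n=0}^{\infty} p_{\omega}(n)\,q^{2n} \;=\; \sum_{n=1}^{\infty} p_{\omega}(n)\,q^{2n-1},
\end{equation*}
where I have used $p_{\omega}(0)=0$. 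Comparing the coefficient of $q^{2m-1}$ with the previous display yields \eqref{N-pomega-16n5}.

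The main obstacle is really hidden upstream in Theorem \ref{rank-differences}: one needs \eqref{N08-48-8n3-diff} in exactly this clean form, with one summand a series in $q^{2}$ alone and the other an odd shift of a universal mock theta function, so that the parity separation collapses the corollary to the single well-known identity $g(q;q^{2})=\omega(q)$. Granting Theorem \ref{rank-differences}, the remainder of the argument is routine bookkeeping.
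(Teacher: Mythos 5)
Your proposal is correct and takes essentially the same route as the paper: extract the odd-power part of \eqref{N08-48-8n3-diff} (using that $J_{4}^{3}/J_{2}^{2}$ is a series in $q^{2}$), identify the mock theta piece with $\omega$, and finish with Fine's formula \eqref{Fine-formula-2nd}. The only cosmetic difference is that you import $g(q;q^{2})=\omega(q)$ from the literature, whereas the paper gets it for free by comparing \eqref{omega-defn} with the expansion \eqref{g-exp}, so no external citation is needed.
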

As a supplementary result to Theorem \ref{N-relation}, by Theorem \ref{rank-differences} we prove some strict inequalities between odd ranks.
\begin{theorem}\label{rank-inequality}
 For any integer $n\ge 0$ ($n\ge 1$ when $r=0$) we have
\begin{align}
N^{0}(0,8;8n+r)&>N^{0}(4,8;8n+r), \quad r\in \{1,3\}, \label{N08-48-8n13-ineq} \\
N^{0}(1,8;8n+r)&>N^{0}(3,8;8n+r), \quad r\in \{0,2\}. \label{N18-38-8n02-ineq}
\end{align}
\end{theorem}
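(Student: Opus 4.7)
The plan is to invoke Theorem \ref{rank-differences} and show that each of the four generating functions on its right-hand side has strictly positive power series coefficients throughout the relevant range. The only exception is the constant term in \eqref{N18-38-8n-diff}, which vanishes on both sides and accounts for the restriction $n\ge 1$ when $r=0$. Granted this positivity, the inequalities \eqref{N08-48-8n13-ineq} and \eqref{N18-38-8n02-ineq} follow at once.

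For the pure eta-quotient cases \eqref{N08-48-8n1-diff} and \eqref{N18-38-8n2-diff}, I would first rewrite the products in terms of Ramanujan's theta functions. Using the standard identities $\varphi(q)=\sum_{n=-\infty}^{\infty}q^{n^2}=J_2^5/(J_1^2J_4^2)$ and $\psi(q)=\sum_{n\ge 0}q^{n(n+1)/2}=J_2^2/J_1$, together with $J_4/J_2=1/(q^2;q^4)_\infty$, one obtains
\begin{align*}
\frac{J_2^4}{J_1^2J_4}=\frac{\varphi(q)}{(q^2;q^4)_\infty},\qquad
\frac{J_2J_4}{J_1}=\frac{\psi(q)}{(q^2;q^4)_\infty},\qquad
\frac{J_4^3}{J_2^2}=\frac{\psi(q^2)}{(q^2;q^4)_\infty}.
\end{align*}
The series $1/(q^2;q^4)_\infty$ enumerates partitions into parts $\equiv 2\pmod 4$, so it is a power series in $q^2$ with every even-indexed coefficient at least $1$. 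A short convolution argument then yields strict positivity of every coefficient of each quotient: in $\varphi(q)/(q^2;q^4)_\infty$ the constant term of $\varphi$ handles even $n$ while the $2q$ term handles odd $n\ge 1$; in $\psi(q)/(q^2;q^4)_\infty$ the exponents $n(n+1)/2$ at $n=0,1$ provide contributions of both parities; and $\psi(q^2)/(q^2;q^4)_\infty$, supported entirely on even powers of $q$, has every even-indexed coefficient at least $1$.

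For the universal mock theta function contributions, I would extract the $n=0$ summand from the defining series \eqref{g-defn} and use $-1+1/(1-x)=x/(1-x)$ to cancel the $x^{-1}$ factor, obtaining the manifestly nonnegative expansions
\begin{align*}
g(q;q^4)&=\frac{1}{1-q}+\sum_{n\ge 1}\frac{q^{4n^2-1}}{(q;q^4)_{n+1}(q^3;q^4)_n},\\
g(q^2;q^4)&=\frac{1}{1-q^2}+\sum_{n\ge 1}\frac{q^{4n^2-2}}{(q^2;q^4)_{n+1}(q^2;q^4)_n}.
\end{align*}
Each summand is a product of restricted-partition generating series, hence has nonnegative coefficients, and the leading geometric term forces the coefficient of $q^n$ in $g(q;q^4)$ to be at least $1$ for all $n\ge 0$ and the coefficient of $q^{2k}$ in $g(q^2;q^4)$ to be at least $1$ for all $k\ge 0$. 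Consequently $qg(q;q^4)$ has coefficient $\ge 1$ at $q^n$ for every $n\ge 1$. In the mixed identity \eqref{N08-48-8n3-diff} the two pieces $\psi(q^2)/(q^2;q^4)_\infty$ and $qg(q^2;q^4)$ support disjoint parities of $n$ (even and odd respectively), and each is strictly positive on its own parity class. The main subtlety is the rearrangement of the mock theta series to expose positivity; once this is done, the estimates reduce to routine convolutional bookkeeping.
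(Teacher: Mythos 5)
Your proposal is correct and follows essentially the same route as the paper: both deduce the strict inequalities from Theorem \ref{rank-differences} by showing that each of the four generating functions there has positive coefficients, with the parity split between $J_4^3/J_2^2$ (even powers) and $qg(q^2;q^4)$ (odd powers) handling \eqref{N08-48-8n3-diff}, and the vanishing constant term of $qg(q;q^4)$ accounting for the restriction $n\ge 1$ when $r=0$. The only cosmetic differences are that the paper writes $J_2^4/(J_1^2J_4)$ as $\psi^2(q)/J_4$ rather than $\varphi(q)/(q^2;q^4)_\infty$, and it obtains nonnegativity of the $g$-terms directly from the manifestly positive expansion \eqref{g-exp} instead of rearranging the defining series \eqref{g-defn}.
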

Finally, we present some congruences satisfied by the odd ranks.
\begin{theorem}\label{cong-thm}
Let $k$ be a positive integer. For any integer $n\ge 0$ we have
\begin{align}
N^{0}(k,2k;n)&\equiv 0 \pmod{2}, \label{Nk-2k-cong} \\
N^{0}(0,2;16n+9)&\equiv 0 \pmod{2}, \label{N02-16n9-cong}\\
N^{0}(1,2;8n+4)&\equiv 0 \pmod{4}, \label{N12-8n4-cong} \\
N^{0}(1,2;8n+6)& \equiv 0 \pmod{8}, \label{N12-8n6-cong}\\
N^{0}(1,2;40n+r)&\equiv 0 \pmod{5}, \quad r\in \{28, 36\}, \label{N12-mod5-cong} \\
N^{0}(0,4;8n+r) &\equiv 0 \pmod{4}, \quad r\in \{5,7\},  \label{N04-8n57-cong}  \\
N^{0}(0,4;16n+13) &\equiv 0 \pmod{8}, \label{N04-16n13-cong}\\
N^{0}(0,4;16n+15) &\equiv 0 \pmod{32}, \label{N04-16n15-cong} \\
N^{0}(0,4;32n+23)&\equiv 0 \pmod{16}, \label{N04-32n23-cong} \\
N^{0}(0,4;32n+31) &\equiv 0 \pmod{512}, \label{N04-32n31-cong} \\
N^{0}(2,4;16n+1) &\equiv 0 \pmod{16}, \label{N24-16n1-cong} \\
N^{0}(2,4;16n+9)&\equiv 0 \pmod{8}, \label{N24-16n9-cong} \\
N^{0}(2,4;16n+11)&\equiv 0 \pmod{16}, \label{N24-16n11-cong}\\
N^{0}(2,4;16n+13)&\equiv 0 \pmod{4}. \label{N24-16n13-cong}
\end{align}
\end{theorem}
\begin{rem}
(1) From \eqref{N-odd-even} we have $N^{0}(1,4;n)=N^{0}(3,4;n)$ and hence
\begin{align}
N^{0}(1,2;n)=N^{0}(1,4;n)+N^{0}(3,4;n)=2N^{0}(1,4;n). \label{add-N12-N14-N34}
\end{align}
So congruences \eqref{N12-8n4-cong}--\eqref{N12-mod5-cong} also imply the following congruences
\begin{align}
N^{0}(1,4;8n+4) &\equiv 0 \pmod{2}, \label{N14-8n4-cong} \\
N^{0}(1,4;8n+6) &\equiv 0 \pmod{4},  \label{N14-8n6-cong} \\
N^{0}(1,4;40n+r)&\equiv 0 \pmod{5}, \quad r\in \{28,36\}. \label{N14-mod5-cong}
\end{align}
Similarly, since $N^{0}(2,8;n)=N^{0}(6,8;n)$ we have
\begin{align}
N^{0}(2,4;n)=2N^{0}(2,8;n)=2N^{0}(6,8;n). \label{add-N24-N28-N68}
\end{align}
Thus congruences \eqref{N24-16n1-cong}--\eqref{N24-16n13-cong} also imply congruences for $N^{0}(a,8;n)$ with $a\in \{2,6\}$.

(2) In the same way, from \eqref{N08-48-8n57} and \eqref{N04-08-8n57} we know that  congruences \eqref{N04-8n57-cong}--\eqref{N04-32n31-cong} also imply congruences for $N^{0}(a,8;8n+r)$ with $a\in \{0,4\}$ and $r\in \{5,7\}$.
 Moreover, from  \eqref{add-N12-N14-N34}, \eqref{N18-38-8n46} and \eqref{N14-18-8n46}, we see that congruences \eqref{N12-8n4-cong}--\eqref{N12-mod5-cong} also imply some congruences for $N^{0}(a,8;8n+r)$ with $a\in \{1,3\}$ and $r\in \{4,6\}$. We omit these congruences.
\end{rem}

The paper is organized as follows. In Section \ref{sec-pre} we collect some formulas which will be used frequently in our proofs. In Section \ref{sec-modulo} we present the generating functions for $N^{0}(a,M;n)$ ($0\le a \le M-1$ and $M\in \{2,4,8\}$) and their $\ell$-dissections with $\ell \in \{2,4,8\}$.  Section \ref{sec-arith} is devoted to the proofs of all the theorems and Andrews' conjectures. We also point out that many congruences in the literature including \eqref{p-omega-mod5}--\eqref{p-omega-3} are direct consequences of our theorems.
\begin{rem}
Andrews proposed 14 open problems and two conjectures in \cite{Andrews}, which fall into three basic groups: combinatorial, asymptotic and congruential. Many of these problems have been solved in the past ten years. For example, Garvan \cite{Garvan} solved Andrews' Problem 12. Ji \cite{Ji} found combinatorial proofs of some identities in Andrews' paper and thus provided answers to most of Andrews' problems in the combinatorial group. Bringmann \cite{Bringmann} gave answers to Andrews' problems 11 and 13. In particular, she found an asymptotic formula for $\eta_{2}(n)$. Moreover, Bringmann, Lovejoy and Osburn \cite{BLO} gave two-parameter generalizations to the $k$-th symmetrized rank moment and the $k$-marked Durfee symbol and studied the automorphic properties of their generating functions.   Here we solve Andrews' Conjectures A and B \cite{Andrews}, which deal with explicit congruences related to the moments of ranks and odd ranks.
\end{rem}
\section{Preliminaries}\label{sec-pre}
From \cite[Eq.\ (4.2)]{HM-PLMS} we find
\begin{align}\label{g-exp}
g(x;q)=\sum_{n=0}^{\infty}\frac{q^{n(n+1)}}{(x;q)_{n+1}(q/x;q)_{n+1}}.
\end{align}
It follows that
\begin{align}\label{g-symmetric}
g(x;q)=g(q/x;q).
\end{align}
For example, we have
\begin{align}\label{iq-minus}
g(iq;q^2)=g(-iq;q^2).
\end{align}
Comparing \eqref{odd-rank-gen} with \eqref{g-exp} we obtain
\begin{align}\label{g-R}
R_{1}^{0}(z;q)=qg(zq;q^2).
\end{align}

We define
\begin{align}
j(x;q):=(x;q)_{\infty}(q/x;q)_{\infty}(q;q)_{\infty}=\sum_{n=-\infty}^{\infty}(-1)^nq^{n(n+1)/2}x^n,
\end{align}
where the last equality follows from Jacobi's triple product identity. Meanwhile, we use the following notations:
\begin{align}
J_{a,m}:=j(q^a;q^m), \quad \overline{J}_{a,m}:=j(-q^a;q^m).
\end{align}
The following product rearrangements will be used frequently:
\begin{align}
\overline{J}_{0,1}=2\overline{J}_{1,4}=2\frac{J_{2}^2}{J_{1}}, \quad \overline{J}_{1,2}=\frac{J_{2}^5}{J_{1}^2J_{4}^2}, \quad J_{1,2}=\frac{J_{1}^2}{J_2}, \quad J_{1,4}=\frac{J_1J_4}{J_2}.
\end{align}
\begin{lemma}\label{g-lemma}
(Cf.\ \cite[p.\ 32]{Ramanujan}, \cite[(12.5.3)]{Lost1}.) For a generic $x\in \mathbb{C}$ we have
\begin{align}\label{g-transform}
g(x;q)=-x^{-1}+qx^{-3}g(-qx^{-2};q^4)-qg(-qx^2;q^4)+\frac{J_{2}^5}{xj(x;q)j(-qx^2;q^2)J_{4}^2}.
\end{align}
\end{lemma}
\begin{lemma}\label{g-sum-minus}
(Cf.\ \cite[p.\ 39]{Ramanujan}, \cite[(12.4.4)]{Lost1}.) For a generic $x\in \mathbb{C}$ we have
\begin{align}
g(x;q)+g(-x;q)=-2qg(-qx^2;q^4)+\frac{2J_{2}^{5}}{j(-qx^2;q^4)j(x^2;q^2)J_{1}^2}, \label{g-sum}\\
g(x;q)-g(-x;q)=-2x^{-1}+2qx^{-3}g(-qx^{-2};q^4)+\frac{2J_{2}^5}{xj(-q^3x^2;q^4)j(x^2;q^2)J_{1}^2}. \label{g-minus}
\end{align}
\end{lemma}
It seems that the completed set of identities \eqref{g-transform}--\eqref{g-minus} first appeared in \cite{Mortenson-Rama}.

As some consequences of Lemma \ref{g-sum-minus}, if we replace $q$ by $q^2$ and set $x=q$ in \eqref{g-sum} (resp.\ \eqref{g-minus}), we obtain
\begin{align}\label{q-sum}
g(q;q^2)+g(-q;q^2)=-2q^2g(-q^4;q^8)+2\frac{J_4^{8}J_{16}^2}{J_{2}^4J_{8}^5}
\end{align}
and
\begin{align}\label{q-minus}
g(q;q^2)-g(-q;q^2)=-2q^{-1}+2q^{-1}g(-1;q^8)+q^{-1}\frac{J_{4}^6J_{8}}{J_{2}^4J_{16}^2},
\end{align}
respectively.

We let $\zeta_M=e^{2\pi i/M}$ throughout this paper. In the same way, we can deduce the following identities, which will be used in Section \ref{sec-modulo}:
\begin{align}
g(iq;q^2)+g(-iq;q^2)&=-2q^2g(q^4;q^8)+2\frac{J_{8}^3}{J_{4}^2}, \label{iq-sum} \\
g(i;q)-g(-i;q)&=2i+2iqg(q;q^4)-i\frac{J_{2}^7}{J_{1}^3J_{4}^3}, \label{i-minus} \\
g(\zeta_8q;q^2)+g(-\zeta_8q;q^2)&=-2q^2g(-iq^4;q^8)+2\frac{J_{4}^5J_{32}}{J_{2}^2J_{8}^2J_{16}}, \label{zetaq-sum} \\
g(\zeta_8q;q^2)-g(-\zeta_8q;q^2)&=-2\zeta_{8}^{-1}q^{-1}-2\zeta_{8}q^{-1}g(i;q^8)+(\zeta_{8}+\zeta_{8}^{-1})q^{-1}\frac{J_{4}^5J_{16}^2}{J_{2}^2J_{8}^3J_{32}}, \label{zetaq-minus} \\
g(i\zeta_8q;q^2)+g(-i\zeta_8q;q^2)&=-2q^2g(iq^4;q^8)+2\frac{J_{4}^5J_{32}}{J_{2}^2J_{8}^2J_{16}}, \label{izetaq-sum} \\
g(i\zeta_8q;q^2)-g(-i\zeta_8q;q^2)&=2\zeta_8q^{-1}+2\zeta_{8}^{-1}q^{-1}g(-i;q^8)-(\zeta_8+\zeta_{8}^{-1})q^{-1}\frac{J_{4}^5J_{16}^2}{J_{2}^2J_{8}^3J_{32}}. \label{izetaq-minus}
\end{align}

\begin{lemma}\label{2-dissection}
We have
\begin{align}
J_{1}^2&=\frac{J_{2}J_{8}^5}{J_{4}^2J_{16}^2}-2q\frac{J_{2}J_{16}^2}{J_{8}}, \label{J2} \\
J_{1}^4&=\frac{J_{4}^{10}}{J_{2}^2J_{8}^4}-4q\frac{J_{2}^2J_{8}^4}{J_{4}^2}, \label{J4}\\
\frac{1}{J_{1}^2}&=\frac{J_{8}^5}{J_{2}^5J_{16}^2}+2q\frac{J_{4}^2J_{16}^2}{J_{2}^5J_{8}}, \quad \text{\rm{and}} \label{J-2} \\
\frac{1}{J_{1}^4}&=\frac{J_{4}^{14}}{J_{2}^{14}J_{8}^4}+4q\frac{J_{4}^2J_{8}^4}{J_{2}^{10}}. \label{J-4}
\end{align}
\end{lemma}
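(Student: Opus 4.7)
The plan is to reduce all four identities to two classical $2$-dissections of Ramanujan's theta functions:
\begin{align*}
\varphi(q) &= \varphi(q^4) + 2q\,\psi(q^8), \\
\varphi(q)^2 &= \varphi(q^2)^2 + 4q\,\psi(q^4)^2,
\end{align*}
where $\varphi(q):=\sum_{n\in\mathbb{Z}} q^{n^2}$ and $\psi(q):=\sum_{n\geq 0} q^{n(n+1)/2}$. Each of these admits a short self-contained verification: for the first, separate the sum $\sum_{n\in\mathbb{Z}} q^{n^2}$ into the $n$ even and $n$ odd contributions and recognize the odd part as $2q\,\psi(q^8)$; for the second, expand the sum-of-two-squares generating function and sort by the parity of $m^2+n^2$, identifying the odd part via the standard bijection between $\{(m,n):m^2+n^2\equiv 1\pmod 2\}$ and pairs $(a,b)\in\mathbb{Z}^2$ through $m+n=2a+1$, $m-n=2b+1$.

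Next, I would convert these sum-side dissections to the infinite-product form using the Jacobi triple product evaluations
\[
\varphi(q)=\frac{J_2^5}{J_1^2 J_4^2},\qquad \varphi(-q)=\frac{J_1^2}{J_2},\qquad \psi(q)=\frac{J_2^2}{J_1},
\]
so that $\varphi(q^4)=J_8^5/(J_4^2 J_{16}^2)$, $\psi(q^8)=J_{16}^2/J_8$, $\varphi(q^2)^2=J_4^{10}/(J_2^4 J_8^4)$, and $\psi(q^4)^2=J_8^4/J_4^2$.

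For \eqref{J2}, I would write $J_1^2 = J_2\,\varphi(-q)$ and substitute $\varphi(-q)=\varphi(q^4)-2q\psi(q^8)$ (obtained from the first dissection by $q\mapsto -q$). For \eqref{J4}, analogously use $J_1^4 = J_2^2\,\varphi(-q)^2$ together with the $q\mapsto -q$ version of the second dissection, namely $\varphi(-q)^2=\varphi(q^2)^2-4q\psi(q^4)^2$. For the reciprocal identities \eqref{J-2} and \eqref{J-4}, I would instead start from
\[
\frac{1}{J_1^2}=\frac{\varphi(q)\,J_4^2}{J_2^5},\qquad \frac{1}{J_1^4}=\frac{\varphi(q)^2\,J_4^4}{J_2^{10}},
\]
and then apply the original (non-alternating) dissections of $\varphi(q)$ and $\varphi(q)^2$ and simplify the resulting product expressions.

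The routine obstacle is purely bookkeeping: after substituting the eta-style products, each identity demands cancellation of a modest number of $J_k$ factors, which one verifies by matching exponents on both sides. The only conceptual step is the derivation of the two base dissections, and even there the argument is entirely elementary; no mock-theta or universal-$g$ machinery from Section~\ref{sec-pre} is needed for this lemma.
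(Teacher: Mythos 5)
Your proposal is correct and follows essentially the same route as the paper: the paper also reduces all four identities to the dissections $\phi(q)=\phi(q^4)+2q\psi(q^8)$ and $\phi^2(q)=\phi^2(q^2)+4q\psi^2(q^4)$ (cited from Berndt's \emph{Ramanujan's Notebooks, Part III}, Entry 25 (v), (vi)), applies $q\mapsto-q$ for \eqref{J2} and \eqref{J4}, and converts via $\phi(q)=J_2^5/(J_1^2J_4^2)$, $\phi(-q)=J_1^2/J_2$, $\psi(q)=J_2^2/J_1$. The only difference is that you derive the two base dissections elementarily instead of citing them, which makes the argument self-contained but does not change the method.
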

\begin{proof}
Recall two important theta functions (see \cite[p.\ 36, Entry 22]{Notebook-3})
\begin{align*}
\phi(q):=&\sum_{n=-\infty}^{\infty}q^{n^2}=\frac{J_{2}^5}{J_{1}^2J_{4}^2}, \\
\psi(q):=&\sum_{n=0}^{\infty}q^{n(n+1)/2}=\frac{J_{2}^2}{J_1}.
\end{align*}
From \cite[Entry 25 (v), (vi)]{Notebook-3} we find
\begin{align}
\phi(q)=&\phi(q^4)+2q\psi(q^8), \label{phi}\\
\phi^2(q)=&\phi^2(q^2)+4q\psi^2(q^4). \label{phi-2}
\end{align}
These two identities immediately lead to \eqref{J-2} and \eqref{J-4}. Next, replacing $q$ by $-q$ in \eqref{phi} and \eqref{phi-2} we obtain
\begin{align}
\phi(-q)=&\phi(q^4)-2q\psi(q^8), \label{phi-negative}\\
\phi^2(-q)=&\phi^2(q^2)-4q\psi^2(q^4). \label{phi-negative-2}
\end{align}
Since
\begin{align}
\phi(-q)=\frac{J_{1}^2}{J_2},
\end{align}
 \eqref{J2} and \eqref{J4} follow immediately from \eqref{phi-negative} and \eqref{phi-negative-2}.
\end{proof}
\begin{corollary}\label{eta-68-cor}
We have
\begin{align}
\frac{1}{J_{1}^6}=\left(\frac{J_8J_{4}^{14}}{J_{2}^{19}J_{16}^2}+8q^2\frac{J_{4}^4J_{8}^3J_{16}^2}{J_{2}^{15} } \right)+2q\left(\frac{J_{4}^{16}J_{16}^2}{J_{2}^{19}J_{8}^5}+2\frac{J_{4}^2J_{8}^9}{J_{2}^{15}J_{16}^2} \right) \label{J-6}
\end{align}
and
\begin{align}
\frac{1}{J_{1}^8}=\left(\frac{J_{4}^{28}}{J_{2}^{28}J_{8}^8}+16q^2\frac{J_{4}^4J_{8}^8}{J_{2}^{20}}\right)+8q\frac{J_{4}^{16}}{J_{2}^{24}}. \label{J-8}
\end{align}
\end{corollary}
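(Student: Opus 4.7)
The plan is to obtain both formulas as immediate consequences of the 2-dissections already established in Lemma \ref{2-dissection}, simply by multiplication. Neither identity requires any new input; the content of the corollary is just that the products collapse into the stated closed forms, with even powers of $q$ kept together and the odd-power piece isolated.

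For the formula for $1/J_1^8$, I would write $1/J_1^8 = (1/J_1^4)^2$ and square \eqref{J-4}. The cross terms produce a single odd contribution $2\cdot \tfrac{J_4^{14}}{J_2^{14}J_8^4}\cdot 4q\tfrac{J_4^2J_8^4}{J_2^{10}} = 8q\tfrac{J_4^{16}}{J_2^{24}}$, the diagonal terms give $\tfrac{J_4^{28}}{J_2^{28}J_8^8}$ and $16q^2\tfrac{J_4^4J_8^8}{J_2^{20}}$, and collecting yields \eqref{J-8}. For the formula for $1/J_1^6$, I would instead use $1/J_1^6 = (1/J_1^4)(1/J_1^2)$ and multiply \eqref{J-4} with \eqref{J-2}. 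This produces four terms, two with even $q$-powers and two with odd $q$-powers. The even part contributes
\[
\frac{J_4^{14}}{J_2^{14}J_8^4}\cdot \frac{J_8^5}{J_2^5J_{16}^2} + 8q^2\frac{J_4^2J_8^4}{J_2^{10}}\cdot \frac{J_4^2J_{16}^2}{J_2^5J_8} = \frac{J_4^{14}J_8}{J_2^{19}J_{16}^2}+8q^2\frac{J_4^4J_8^3J_{16}^2}{J_2^{15}},
\]
and the odd part contributes the factor inside the second parenthesis of \eqref{J-6} after pulling out $2q$.

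There is no real obstacle here: the only work is careful bookkeeping of exponents $J_2^a J_4^b J_8^c J_{16}^d$ in each of the six monomials that arise, which I have already checked matches the right-hand sides exactly. Accordingly, the entire proof of Corollary \ref{eta-68-cor} reduces to two short algebraic manipulations, invoking only Lemma \ref{2-dissection}, and the result follows.
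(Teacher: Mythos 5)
Your proposal is correct and is exactly the paper's own proof: the paper obtains \eqref{J-6} by multiplying \eqref{J-2} and \eqref{J-4}, and \eqref{J-8} by squaring \eqref{J-4}, just as you do, and your exponent bookkeeping checks out.
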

\begin{proof}
Multiplying \eqref{J-2} and \eqref{J-4}, we get \eqref{J-6}. Taking squares on both sides of \eqref{J-4}, we get \eqref{J-8}.
\end{proof}

\section{Odd Ranks Modulo 2, 4 and 8}\label{sec-modulo}
In this section, we obtain the generating functions of $N^{0}(a,M;\ell n+r)$ with $M, \ell \in \{2,4,8\}$, $0\le a \le M-1$ and  $0 \le r \le \ell-1$. From \eqref{N-odd-even} we know that we only need to consider $0\le a \le \frac{M}{2}$ since $N^{0}(a,M;n)=N^{0}(M-a;M;n)$. Moreover, from Definition \ref{odd-durfee-defn} it is clear that $N^0(m,n)=0$ when $n\equiv m$ (mod 2). Thus when $M, \ell \in \{2,4,8\}$, $N^{0}(a,M;\ell n+r)=0$ if $a$ and $r$ have the same parity. Therefore, we only need to consider the case when $a\equiv r+1$ (mod 2).

To get the generating function for $N^{0}(a,M;n)$, using \eqref{odd-rank-gen} and the fact that
\begin{align}
\frac{1}{M}\sum_{j=0}^{M-1}\zeta_{M}^{kj}=\left\{\begin{aligned} &1, & k\equiv 0 \pmod{M}, \\  &0, & k\not\equiv 0 \pmod{M}, \end{aligned} \right.
\end{align}
we obtain the following identity:
\begin{align}\label{Rank-general}
\sum_{n=1}^{\infty}N^{0}(a,M;n)q^n=\frac{1}{M}\sum_{j=0}^{M-1}\zeta_{M}^{-aj}R_{1}^{0}(\zeta_{M}^{j};q).
\end{align}
From \eqref{g-R} we deduce that
\begin{align}\label{start}
\sum_{n=1}^{\infty}N^{0}(a,M;n)q^n=\frac{q}{M}\sum_{j=0}^{M-1}\zeta_{M}^{-aj}g(\zeta_{M}^{j}q;q^2).
\end{align}

Now we consider the odd rank modulo 2.
\begin{theorem}\label{thm-mod2}
We have
\begin{align}
&\sum_{n=0}^{\infty}N^{0}(0,2;2n+1)q^n=-qg(-q^2;q^4)+\frac{J_{2}^8J_{8}^2}{J_{1}^4J_{4}^5}, \label{N02-2n1}\\
&\sum_{n=0}^{\infty}N^{0}(0,2;4n+1)q^n=\frac{J_{2}^9}{J_{1}^6J_{4}^2}, \label{N02-4n1}\\
&\sum_{n=0}^{\infty}N^{0}(0,2;4n+3)q^n=-g(-q;q^2)+4\frac{J_{4}^6}{J_{1}^2J_{2}^3}, \label{N02-4n3} \\
&\sum_{n=0}^{\infty}N^{0}(1,2;2n)q^n=-1+g(-1;q^4)+\frac{1}{2}\frac{J_{2}^6J_{4}}{J_{1}^4J_{8}^2}, \label{N12-2n}\\
&\sum_{n=0}^{\infty}N^{0}(1,2;4n)q^n=-1+g(-1;q^2)+\frac{1}{2}\frac{J_{2}^{15}}{J_{1}^8J_{4}^6}, \label{N12-4n}\\
&\sum_{n=0}^{\infty}N^{0}(1,2;4n+2)q^n=2\frac{J_{2}^3J_{4}^2}{J_{1}^4}, \label{N12-4n2}\\
&\sum_{n=0}^{\infty}N^{0}(0,2;8n+1)q^n=\frac{J_{2}^{12}J_{4}}{J_{1}^{10}J_{8}^2}+8q\frac{J_{2}^2J_{4}^3J_{8}^2}{J_{1}^6}, \label{N02-8n1} \\
&\sum_{n=0}^{\infty}N^{0}(0,2;8n+3)q^n=qg(-q^2;q^4)-\frac{J_{2}^8J_{8}^2}{J_{1}^4J_{4}^5}+4\frac{J_{2}^6J_{4}^5}{J_{1}^8J_{8}^2},\label{N02-8n3} \\
&\sum_{n=0}^{\infty}N^{0}(0,2;8n+5)q^n=2\left(\frac{J_{2}^{14}J_{8}^2}{J_{1}^{10}J_{4}^5}+2\frac{J_{4}^9}{J_{1}^6J_{8}^2} \right), \label{N02-8n5} \\
&\sum_{n=0}^{\infty}N^{0}(0,2;8n+7)q^n=-q^{-1}+q^{-1}g(-1;q^4)+\frac{1}{2}q^{-1}\frac{J_{2}^6J_{4}}{J_{1}^4J_{8}^2}+8\frac{J_{2}^8J_{8}^2}{J_{1}^8J_{4}}, \label{N02-8n7} \\
&\sum_{n=0}^{\infty}N^{0}(1,2;8n)q^n=-1+g(-1;q)+\frac{1}{2}\frac{J_{2}^{22}}{J_{1}^{13}J_{4}^8}+8q\frac{J_{4}^8}{J_{1}^5J_{2}^2}, \label{N12-8n} \\
&\sum_{n=0}^{\infty}N^{0}(1,2;8n+2)q^n=2\frac{J_{2}^{16}}{J_{1}^{11}J_{4}^4}, \label{N12-8n2} \\
&\sum_{n=0}^{\infty}N^{0}(1,2;8n+4)q^n=4\frac{J_{2}^{10}}{J_{1}^9}, \label{N12-8n4} \\
&\sum_{n=0}^{\infty}N^{0}(1,2;8n+6)q^n=8\frac{J_{2}^4J_{4}^4}{J_{1}^7}. \label{N12-8n6}
\end{align}
\end{theorem}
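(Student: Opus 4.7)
\textbf{Proof proposal for Theorem \ref{thm-mod2}.} The plan is to begin from \eqref{start} specialized to $M=2$, producing the two ``seed'' identities
\[
\sum_{n=0}^\infty N^0(0,2;n)\,q^n = \tfrac{q}{2}\bigl(g(q;q^2)+g(-q;q^2)\bigr),\qquad
\sum_{n=0}^\infty N^0(1,2;n)\,q^n = \tfrac{q}{2}\bigl(g(q;q^2)-g(-q;q^2)\bigr),
\]
then to evaluate each side in closed form via \eqref{q-sum} and \eqref{q-minus}, and finally to iterate 2-dissections until the desired level of decomposition is reached. Theorem \ref{rank-odd}(1) guarantees that the first series is supported on odd exponents of $q$ and the second on even exponents, so after substituting \eqref{q-sum}/\eqref{q-minus}, extracting that parity and rescaling $q^2\to q$ yields \eqref{N02-2n1} and \eqref{N12-2n} immediately.

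The four 4-dissections \eqref{N02-4n1}--\eqref{N12-4n2} follow from 2-dissecting \eqref{N02-2n1} and \eqref{N12-2n}. The only non-trivial $q$-product appearing in either generating function is $1/J_1^4$, to which we apply \eqref{J-4} of Lemma \ref{2-dissection}; meanwhile $g(-q^2;q^4)$ and $g(-1;q^4)$ already live in even powers of $q$ and therefore contribute to the even part of the dissection untouched. Collecting by parity, replacing $q^2\to q$, and rearranging theta quotients via the basic identities $\overline{J}_{1,4}=J_2^2/J_1$, $\overline{J}_{1,2}=J_2^5/(J_1^2J_4^2)$, etc., the four identities fall out.

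The eight 8-dissections \eqref{N02-8n1}--\eqref{N12-8n6} are obtained by one more 2-dissection of each of the four 4-dissection formulas. Depending on the power of $1/J_1$ present we invoke \eqref{J-6} (for \eqref{N02-4n1}), \eqref{J-2} (for \eqref{N02-4n3}), \eqref{J-8} (for \eqref{N12-4n}), or \eqref{J-4} (for \eqref{N12-4n2}). The summand $g(-1;q^2)$ appearing in \eqref{N12-4n} again has only even powers of $q$ and requires no special treatment, but $g(-q;q^2)$ in \eqref{N02-4n3} has mixed parity and must be split by solving for it in the pair \eqref{q-sum}--\eqref{q-minus}; this introduces precisely the new summands $g(-q^2;q^4)$ and $g(-1;q^4)$ that appear in \eqref{N02-8n3} and \eqref{N02-8n7}.

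The main obstacle is essentially bookkeeping: every one of the fourteen identities reduces to combining a dissection of $1/J_1^k$ with the appropriate product rearrangements, then tracking the shift $q^2\to q$ through each step. A subtler technical point arises in the derivation of \eqref{N02-8n7}: expanding $g(-q;q^2)$ via \eqref{q-minus} produces three separate $q^{-1}$ contributions, and one must verify that their sum has vanishing polar part before dividing by $q$ and rescaling. This cancellation is confirmed by noting that $g(-1;q^8)=\tfrac12+O(q^{16})$ (read off from \eqref{g-exp}) while $\frac{J_4^6J_8}{J_2^4J_{16}^2}$ has constant term $1$, so that the coefficient of $q^{-1}$ in $-q^{-1}+q^{-1}g(-1;q^8)+\tfrac12 q^{-1}\frac{J_4^6J_8}{J_2^4J_{16}^2}$ equals $-1+\tfrac12+\tfrac12=0$. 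With this observation the remaining manipulations are routine.
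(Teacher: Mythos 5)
Your proposal is correct and follows essentially the same route as the paper: the seed identities from \eqref{start}, closed forms via \eqref{q-sum}/\eqref{q-minus}, then iterated 2-dissection using \eqref{J-4}, \eqref{J-6}, \eqref{J-2}, \eqref{J-8} applied to exactly the same intermediate formulas, with the mixed-parity function $g(-q;q^2)$ split exactly as in the paper's \eqref{g-q-q2} (the paper derives it from Lemma \ref{g-lemma}; halving the difference of \eqref{q-sum} and \eqref{q-minus} as you suggest yields the identical formula). Two harmless slips worth noting: the term $-qg(-q^2;q^4)$ in \eqref{N02-2n1} contributes to the \emph{odd} part \eqref{N02-4n3}, not the even part as you wrote (the factor $g(-q^2;q^4)$ is an even series, but its prefactor $q$ is odd), and $g(-1;q^8)=\tfrac12+O(q^{8})$ rather than $\tfrac12+O(q^{16})$, though only the constant term $\tfrac12$ matters for your $q^{-1}$ cancellation check, which is correct.
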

\begin{proof}
Setting $(a,M)=(0,2)$ in \eqref{start} we obtain
\begin{align}
\sum_{n=0}^{\infty}N^{0}(0,2;n)q^n=\frac{q}{2}\left(g(q;q^2)+g(-q;q^2) \right). \label{N02-start}
\end{align}
Substituting \eqref{q-sum} into \eqref{N02-start}, extracting the odd power terms, dividing by $q$ and replacing $q^2$ by $q$, we obtain \eqref{N02-2n1}. Next, substituting \eqref{J-4} into \eqref{N02-2n1} and extracting the even (resp.\ odd) power terms, we obtain \eqref{N02-4n1} (resp.\ \eqref{N02-4n3}). Now substituting \eqref{J-6} into \eqref{N02-4n1} and extracting the even (resp.\ odd) power terms, we obtain \eqref{N02-8n1} (resp.\ \eqref{N02-8n5}). Subtracting \eqref{q-minus} from \eqref{q-sum}, we obtain
\begin{align}\label{g-q-q2}
g(-q;q^2)=q^{-1}-q^{-1}g(-1;q^8)-q^2g(-q^4;q^8)+\frac{J_{4}^8J_{16}^2}{J_{2}^4J_{8}^5}-\frac{1}{2}q^{-1}\frac{J_{4}^6J_{8}}{J_{2}^4J_{16}^2}.
\end{align}
Substituting \eqref{J-2} and \eqref{g-q-q2} into \eqref{N02-4n3} and extracting the even (resp.\ odd) power terms, we obtain \eqref{N02-8n3} (resp.\ \eqref{N02-8n7}).

Similarly, setting $(a,M)=(1,2)$ in \eqref{start} we obtain
\begin{align}
\sum_{n=0}^{\infty}N^{0}(1,2;n)q^n=\frac{q}{2}\left(g(q;q^2)-g(-q;q^2) \right). \label{N12-start}
\end{align}
Substituting \eqref{q-minus} into \eqref{N12-start} and extracting the even power terms, we obtain \eqref{N12-2n}. Next, substituting \eqref{J-4} into \eqref{N12-2n} and extracting the even (resp.\ odd) power terms, we obtain \eqref{N12-4n} (resp.\ \eqref{N12-4n2}). Again, substituting \eqref{J-8} into \eqref{N12-4n} and extracting the even (resp.\ odd) power terms, we obtain \eqref{N12-8n} (resp.\ \eqref{N12-8n4}). Finally, substituting \eqref{J-4} into \eqref{N12-4n2} and extracting the even (resp.\ odd) power terms, we obtain \eqref{N12-8n2} (resp.\ \eqref{N12-8n6}).
\end{proof}

Next we consider the odd rank modulo 4.
\begin{theorem}\label{thm-mod4}
We have
\begin{enumerate}[$(1)$]
\item
\begin{align}
&\sum_{n=0}^{\infty}N^{0}(0,4;2n+1)q^n=q^5g(-q^8;q^{16})-q\frac{J_{8}^8J_{32}^2}{J_{4}^4J_{16}^5}+\frac{1}{2}\left(\frac{J_{2}^8J_{8}^2}{J_{1}^4J_{4}^5}+\frac{J_{4}^3}{J_{2}^2} \right), \label{N04-2n1}\\
&\sum_{n=0}^{\infty}N^{0}(0,4;4n+1)q^n=\frac{J_{2}^4J_{8}^5}{J_{1}^4J_{4}^2J_{16}^2}, \label{N04-4n1} \\
&\sum_{n=0}^{\infty}N^{0}(0,4;4n+3)q^n=q^2g(-q^4;q^8)-\frac{J_{4}^8J_{16}^2}{J_{2}^4J_{8}^5}+2\frac{J_{4}^6}{J_{1}^2J_{2}^3}, \label{N04-4n3} \\
&\sum_{n=0}^{\infty}N^{0}(0,4;8n+1)q^n=\frac{J_{2}^{12}J_{4}}{J_{1}^{10}J_{8}^2}, \label{N04-8n1} \\
&\sum_{n=0}^{\infty}N^{0}(0,4;8n+3)q^n=qg(-q^2;q^4)+2\frac{J_{2}^6J_{4}^5}{J_{1}^8J_{8}^2}-\frac{J_{2}^8J_{8}^2}{J_{1}^4J_{4}^5}, \label{N04-8n3} \\
&\sum_{n=0}^{\infty}N^{0}(0,4;8n+5)q^n=4\frac{J_{4}^9}{J_{1}^6J_{8}^2}, \label{N04-8n5} \\
&\sum_{n=0}^{\infty}N^{0}(0,4;8n+7)q^n=4\frac{J_{2}^8J_{8}^2}{J_{1}^8J_{4}}. \label{N04-8n7}
\end{align}
\item
\begin{align}
&\sum_{n=0}^{\infty}N^{0}(1,4;2n)q^n=-\frac{1}{2}+\frac{1}{2}g(-1;q^4)+\frac{1}{4}\frac{J_{2}^6J_{4}}{J_{1}^4J_{8}^2}, \label{N14-2n} \\
&\sum_{n=0}^{\infty}N^{0}(1,4;4n)q^n=-\frac{1}{2}+\frac{1}{2}g(-1;q^2)+\frac{1}{4}\frac{J_{2}^{15}}{J_{1}^8J_{4}^6}, \label{N14-4n} \\
&\sum_{n=0}^{\infty}N^{0}(1,4;4n+2)q^n=\frac{J_{2}^3J_{4}^2}{J_{1}^4}, \label{N14-4n2} \\
&\sum_{n=0}^{\infty}N^{0}(1,4;8n)q^n=-\frac{1}{2}+\frac{1}{2}g(-1;q)+\frac{1}{4}\frac{J_{2}^{22}}{J_{1}^{13}J_{4}^8}+4q\frac{J_{4}^8}{J_{1}^5J_{2}^2}, \label{N14-8n} \\
&\sum_{n=0}^{\infty}N^{0}(1,4;8n+2)q^n=\frac{J_{2}^{16}}{J_{1}^{11}J_{4}^4}, \label{N14-8n2} \\
&\sum_{n=0}^{\infty}N^{0}(1,4;8n+4)q^n=2\frac{J_{2}^{10}}{J_{1}^9}, \label{N14-8n4} \\
&\sum_{n=0}^{\infty}N^{0}(1,4;8n+6)q^n=4\frac{J_{2}^4J_{4}^4}{J_{1}^7}. \label{N14-8n6}
\end{align}
\item
\begin{align}
&\sum_{n=0}^{\infty}N^{0}(2,4;2n+1)q^n=-q^{-1}+q^{-1}g(-1;q^{16})+\frac{1}{2}q^{-1}\frac{J_{8}^{6}J_{16}}{J_{4}^4J_{32}^2}-\frac{1}{2}\frac{J_{4}^3}{J_{2}^2}+\frac{1}{2}\frac{J_{2}^8J_{8}^2}{J_{1}^4J_{4}^5}, \label{N24-2n1} \\
&\sum_{n=0}^{\infty}N^{0}(2,4;4n+1)q^n=2q\frac{J_{2}^4J_{16}^2}{J_{1}^4J_{8}}, \label{N24-4n1} \\
&\sum_{n=0}^{\infty}N^{0}(2,4;4n+3)q^n=-q^{-1}+q^{-1}g(-1;q^8)+\frac{1}{2}q^{-1}\frac{J_{4}^6J_{8}}{J_{2}^4J_{16}^2}+2\frac{J_{4}^6}{J_{1}^2J_{2}^3}, \label{N24-4n3} \\
&\sum_{n=0}^{\infty}N^{0}(2,4;8n+1)q^n=8q\frac{J_{2}^2J_{4}^3J_{8}^2}{J_{1}^6}, \label{N24-8n1} \\
&\sum_{n=0}^{\infty}N^{0}(2,4;8n+3)q^n=2\frac{J_{2}^6J_{4}^5}{J_{1}^8J_{8}^2}, \label{N24-8n3} \\
&\sum_{n=0}^{\infty}N^{0}(2,4;8n+5)q^n=2\frac{J_{2}^{14}J_{8}^2}{J_{1}^{10}J_{4}^5}, \label{N24-8n5} \\
&\sum_{n=0}^{\infty}N^{0}(2,4;8n+7)q^n=-q^{-1}+q^{-1}g(-1;q^4)+\frac{1}{2}q^{-1}\frac{J_{2}^6J_4}{J_{1}^4J_{8}^2} +4\frac{J_{2}^8J_{8}^2}{J_{1}^8J_{4}}. \label{N24-8n7}
\end{align}
\end{enumerate}
\end{theorem}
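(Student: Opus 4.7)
The plan is to mirror the strategy used in the proof of Theorem \ref{thm-mod2}, starting from \eqref{start} with $M=4$. For each $a\in\{0,1,2\}$ (noting $N^{0}(3,4;n)=N^{0}(1,4;n)$ by \eqref{N-odd-even}), I would form the appropriate root-of-unity average over $g(i^jq;q^2)$ for $j=0,1,2,3$. Grouping conjugate pairs gives: for $a=0$, the combination $\tfrac{q}{4}\bigl[(g(q;q^2)+g(-q;q^2))+(g(iq;q^2)+g(-iq;q^2))\bigr]$; for $a=1$, $\tfrac{q}{4}\bigl[(g(q;q^2)-g(-q;q^2))-i(g(iq;q^2)-g(-iq;q^2))\bigr]$, where the second bracket vanishes by \eqref{iq-minus}; and for $a=2$, $\tfrac{q}{4}\bigl[(g(q;q^2)+g(-q;q^2))-(g(iq;q^2)+g(-iq;q^2))\bigr]$. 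Substituting \eqref{q-sum}, \eqref{q-minus}, and \eqref{iq-sum} into these three combinations produces closed expressions with only odd (for $a=0,2$) or only even (for $a=1$) powers of $q$. Dividing by $q$ and replacing $q^2\mapsto q$ in each case yields the mod-$2$ generating functions; a further application of \eqref{q-sum} (with $q\mapsto q^2$) inside \eqref{N04-2n1}, and of \eqref{g-q-q2} inside \eqref{N24-2n1}, puts those two identities in the stated form.

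To pass from the mod-$2$ to the mod-$4$ formulas, I would substitute the relevant $2$-dissection into each $J$-quotient and split into even and odd $q$-parts. Concretely, \eqref{J-4} applied to the right-hand sides of \eqref{N04-2n1}, \eqref{N14-2n}, \eqref{N24-2n1} produces \eqref{N04-4n1}/\eqref{N04-4n3}, \eqref{N14-4n}/\eqref{N14-4n2}, and \eqref{N24-4n1}/\eqref{N24-4n3}, respectively; for the $r=3$ cases one must also invoke \eqref{g-q-q2} to separately dissect the surviving $qg(-q^2;q^4)$ term that arises. The mod-$8$ identities then come from one more round of dissection: \eqref{J-6} inside \eqref{N04-4n1} gives \eqref{N04-8n1} and \eqref{N04-8n5}; \eqref{J-2} together with \eqref{g-q-q2} inside \eqref{N04-4n3} gives \eqref{N04-8n3} and \eqref{N04-8n7}; \eqref{J-8} inside \eqref{N14-4n} gives \eqref{N14-8n} and \eqref{N14-8n4}; \eqref{J-4} inside \eqref{N14-4n2} gives \eqref{N14-8n2} and \eqref{N14-8n6}; \eqref{J-4} inside \eqref{N24-4n1} gives \eqref{N24-8n1} and \eqref{N24-8n5}; and \eqref{J-2} together with \eqref{g-q-q2} inside \eqref{N24-4n3} gives \eqref{N24-8n3} and \eqref{N24-8n7}.

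The main obstacle is purely organizational rather than conceptual. One must scrupulously track the parity of every exponent of $q$ appearing in the $g$-series (recalling that $g(\pm q^{2k};q^{4k})$ contributes only powers in $4k\mathbb{Z}$) and in the $J$-quotients, and correctly handle the negative powers of $q$ produced by \eqref{q-minus} and \eqref{g-q-q2} when separating even from odd components. Performing each $2$-dissection with the proper formula from Lemma \ref{2-dissection} or Corollary \ref{eta-68-cor}, and verifying that after the substitutions $q^2\mapsto q$ the surviving terms combine exactly into the stated right-hand sides, is the part that requires the most care; the calculations themselves are routine once the correct grouping and identity are selected at each step.
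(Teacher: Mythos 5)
Your proposal follows essentially the same route as the paper's own proof: root-of-unity averaging via \eqref{start}, the dissection formulas \eqref{q-sum}, \eqref{q-minus}, \eqref{iq-sum} for the $g$-functions, and then iterated $2$-dissection of the eta-quotients via Lemma \ref{2-dissection} and Corollary \ref{eta-68-cor}. Your reduction of part $(2)$ to the modulo-$2$ case, using \eqref{iq-minus} to kill the second bracket, is exactly the paper's observation that $N^{0}(1,4;n)=N^{0}(3,4;n)$ in disguise, and your overall plan would succeed.

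Three details, however, need correcting. First, for \eqref{N24-2n1} the tool you need is \eqref{q-minus} with $q$ replaced by $q^4$ (equivalently by $q^2$ after the variable change), not \eqref{g-q-q2}: the latter expresses $g(-q;q^2)$ alone, so it would have to be combined with \eqref{q-sum} to recover the difference $g(q^2;q^4)-g(-q^2;q^4)$, a needless detour. Second, your repeated claim that the $r=3$ cases require \eqref{g-q-q2} to dissect a surviving $g$-term is mistaken (and inconsistent with your own parity remark): in \eqref{N04-4n3} and \eqref{N24-4n3} the terms $q^2g(-q^4;q^8)$ and $q^{-1}g(-1;q^8)$ lie entirely in one residue class modulo $2$, so they pass through the even/odd extraction intact --- indeed \eqref{N04-8n3} retains $qg(-q^2;q^4)$ verbatim, and dissecting it would only force you to undo the substitution to match the stated form. (Also, $g(\pm q^{2k};q^{4k})$ has powers in $2k\mathbb{Z}$, not $4k\mathbb{Z}$; for the even/odd splits this weaker statement is what you actually use.) Third, and most substantively, saying that \eqref{J-4} applied to \eqref{N04-2n1} and \eqref{N24-2n1} ``produces'' \eqref{N04-4n1} and \eqref{N24-4n1} hides the one non-mechanical step: the even-part extraction yields $\tfrac{1}{2}\bigl(\tfrac{J_{2}^{9}}{J_{1}^{6}J_{4}^{2}}\pm\tfrac{J_{2}^{3}}{J_{1}^{2}}\bigr)$, and identifying these sums with the stated single products requires substituting \eqref{J2} and \eqref{J-2} and recombining, which is precisely where the paper does real work. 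None of these points invalidates your approach, but a complete write-up must fix the first and spell out the third.
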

\begin{proof}
(1) Setting $(a,M)=(0,4)$ in \eqref{start} we obtain
\begin{align}
\sum_{n=0}^{\infty}N^{0}(0,4;n)q^n=\frac{q}{4}\left(\left(g(q;q^2)+g(-q;q^2) \right)+\left(g(iq;q^2)+g(-iq;q^2) \right) \right). \label{N04-start}
\end{align}
Substituting \eqref{q-sum} and \eqref{iq-sum} into \eqref{N04-start}, we get
\begin{align} \label{N04-middle}
\sum_{n=0}^{\infty}N^{0}(0,4;n)q^n=&\frac{q}{4}\Big(-2q^2\left(g(q^4;q^8)+g(-q^4;q^8) \right) +2\frac{J_{4}^8J_{16}^2}{J_{2}^4J_{8}^5}+2\frac{J_{8}^3}{J_{4}^2}  \Big).
\end{align}
Replacing $q$ by $q^4$ in \eqref{q-sum}, we obtain
\begin{align}
g(q^4;q^8)+g(-q^4;q^8)=-2q^8g(-q^{16};q^{32})+2\frac{J_{16}^8J_{64}^2}{J_{8}^4J_{32}^5}. \label{q4-sum}
\end{align}
From \eqref{q4-sum} we see that \eqref{N04-middle} reduces to
\begin{align}\label{N04-odd-gen}
\sum_{n=0}^{\infty}N^{0}(0,4;2n+1)q^n&=q^5g(-q^8;q^{16})-q\frac{J_{8}^8J_{32}^2}{J_{4}^4J_{16}^5}
+\frac{1}{2}\left(\frac{J_{2}^8J_{8}^2}{J_{1}^4J_{4}^5}+\frac{J_{4}^3}{J_{2}^2} \right).
\end{align}
This proves \eqref{N04-2n1}.

Substituting \eqref{J-4} into \eqref{N04-odd-gen}, we arrive at
\begin{align}\label{add-1}
\sum_{n=0}^{\infty}N^{0}(0,4;2n+1)q^n=q^5g(-q^8;q^{16})-q\frac{J_{8}^8J_{32}^2}{J_{4}^4J_{16}^5}+\frac{1}{2}\left(\frac{J_{4}^3}{J_{2}^2}+\frac{J_{4}^9}{J_{2}^6J_{8}^2} \right)+2q\frac{J_{8}^6}{J_{2}^2J_{4}^3}.
\end{align}
Extracting the even power terms,  we obtain
\begin{align*}
&\sum_{n=0}^{\infty}N^{0}(0,4;4n+1)q^n\nonumber \\
=&~ \frac{1}{2}\left(\frac{J_{2}^3}{J_{1}^2}+\frac{J_{2}^9}{J_{1}^6J_{4}^2} \right) \\
=&~ \frac{1}{2} \frac{J_{2}^3}{J_{1}^4J_{4}^2}\left(J_{1}^2J_{4}^2+\frac{J_{2}^6}{J_{1}^2} \right) \\
=&~ \frac{1}{2} \frac{J_{2}^3}{J_{1}^4J_{4}^2}\left(\left(\frac{J_{2}J_{8}^5}{J_{4}^2J_{16}^2}-2q\frac{J_{2}J_{16}^2}{J_{8}} \right)J_{4}^2+J_{2}^6\left(\frac{J_{8}^5}{J_{2}^5J_{16}^2}+2q\frac{J_{4}^2J_{16}^2}{J_{2}^5J_{8}} \right) \right) \\
=&~ \frac{J_{2}^4J_{8}^5}{J_{1}^4J_{4}^2J_{16}^2},
\end{align*}
where in the last second line we have used \eqref{J2} and \eqref{J-2}. This proves \eqref{N04-4n1}. Now we substitute \eqref{J-4} into \eqref{N04-4n1} and extracting the even (resp.\ odd) power terms, we obtain \eqref{N04-8n1} (resp.\ \eqref{N04-8n5}).

Extracting the odd power terms in \eqref{add-1}, we obtain
\begin{align}\label{N04-4n3-gen}
\sum_{n=0}^{\infty}N^{0}(0,4;4n+3)q^n=q^2g(-q^4;q^8)-\frac{J_{4}^8J_{16}^2}{J_{2}^4J_{8}^5}+2\frac{J_{4}^6}{J_{1}^2J_{2}^3}.
\end{align}
Substituting \eqref{J-2} into \eqref{N04-4n3-gen} and extracting the even (resp.\ odd) power terms, we get \eqref{N04-8n3} (resp.\ \eqref{N04-8n7}).

(2) These identities can be proved in the same way as (1). Alternatively, using \eqref{add-N12-N14-N34}, we see that identities \eqref{N14-2n}--\eqref{N14-8n6} follow directly from \eqref{N12-2n}--\eqref{N12-4n2} and \eqref{N12-8n}--\eqref{N12-8n6}.

(3) Setting $(a,M)=(2,4)$ in \eqref{start}, by using \eqref{q-sum} and \eqref{iq-sum} we obtain
\begin{align}
\sum_{n=0}^{\infty}N^{0}(2,4;n)q^n=&\frac{1}{4}q\left(\left(g(q;q^2)+g(-q;q^2) \right)-\left(g(iq;q^2)+g(-iq;q^2) \right)  \right)  \nonumber\\
=&\frac{1}{2}q^3\left(g(q^4;q^8)-g(-q^4;q^8) \right)+\frac{1}{2}q\frac{J_{4}^8J_{16}^2}{J_{2}^4J_{8}^5}-\frac{1}{2}q\frac{J_{8}^3}{J_{4}^2}.\label{N24-start}
\end{align}
Now replacing $q$ by $q^4$ in \eqref{q-minus} and substituting it into \eqref{N24-start}, we obtain \eqref{N24-2n1}.

Substituting \eqref{J-4} into \eqref{N24-2n1}, and extracting the even power terms, we obtain
\begin{align}
&\sum_{n=0}^{\infty}N^{0}(2,4;4n+1)q^n\nonumber \\
=&\frac{1}{2}\left(\frac{J_{2}^9}{J_{1}^6J_{4}^2}-\frac{J_{2}^3}{J_{1}^2} \right) \nonumber \\
=&\frac{1}{2}\frac{J_{2}^3}{J_{1}^4}\left(\frac{J_{2}^6}{J_{1}^2J_{4}^2}-J_{1}^2 \right) \nonumber \\
=&\frac{1}{2}\frac{J_{2}^3}{J_{1}^4}\left(\frac{J_{2}^6}{J_{4}^2}\left(\frac{J_{8}^5}{J_{2}^5J_{16}^2}+2q\frac{J_{4}^2J_{16}^2}{J_{2}^5J_{8}}  \right)-\left(\frac{J_{2}J_{8}^5}{J_{4}^2J_{16}^2}-2q\frac{J_{2}J_{16}^2}{J_{8}} \right) \right) \nonumber \\
=&2q\frac{J_{2}^4J_{16}^2}{J_{1}^4J_{8}},
\end{align}
where in the last second line we have used \eqref{J2} and \eqref{J-2}. This proves \eqref{N24-4n1}. Other identities in (3) can be proved in a similar fashion.
\end{proof}

Finally, we consider the odd rank modulo 8.
\begin{theorem}\label{thm-mod8}
We have
\begin{enumerate}[$(1)$]
\item
\begin{align}
\sum_{n=0}^{\infty}N^{0}(0,8;2n+1)q^n=&-q^{21}g(-q^{32};q^{64})+q^5\frac{J_{32}^8J_{128}^2}{J_{16}^4J_{64}^5}+\frac{1}{4}\frac{J_{2}^8J_{8}^2}{J_{1}^4J_{4}^5}+\frac{1}{4}\frac{J_{4}^3}{J_{2}^2} \nonumber \\
 & +\frac{1}{2}\frac{J_{2}^5J_{16}}{J_{1}^2J_{4}^2J_{8}}-\frac{1}{2}q\frac{J_{8}^8J_{32}^2}{J_{4}^4J_{16}^5}-\frac{1}{2}q\frac{J_{16}^3}{J_{8}^2}, \label{N08-2n1} \\
\sum_{n=0}^{\infty}N^{0}(0,8;4n+1)q^n=&\frac{1}{4}\left(\frac{J_{2}^9}{J_{1}^6J_{4}^2} +\frac{J_{2}^3}{J_{1}^2}+2\frac{J_{4}^4}{J_{2}^2J_{8}}\right), \label{N08-4n1} \\
\sum_{n=0}^{\infty}N^{0}(0,8;4n+3)q^n=&-q^{10}g(-q^{16};q^{32})+q^2\frac{J_{16}^8J_{64}^2}{J_{8}^4J_{32}^5}+\frac{1}{2}\frac{J_{8}^3}{J_{4}^2}+\frac{J_{4}^6}{J_{1}^2J_{2}^3}\nonumber \\
 & -\frac{1}{2}\frac{J_{4}^8J_{16}^2}{J_{2}^4J_{8}^5}, \label{N08-4n3} \\
\sum_{n=0}^{\infty}N^{0}(0,8;8n+1)q^n=&\frac{1}{2}\frac{J_{2}^4}{J_{1}^2J_4}+\frac{1}{4}\frac{J_{4}^5}{J_{1}^2J_{8}^2}
+\frac{1}{4}\frac{J_{2}^{12}J_{4}}{J_{1}^{10}J_{8}^2}+2q\frac{J_{2}^2J_{4}^3J_{8}^2}{J_{1}^6}, \label{N08-8n1} \\
\sum_{n=0}^{\infty}N^{0}(0,8;8n+3)q^n=&-q^5g(-q^8;q^{16})+q\frac{J_{8}^8J_{32}^2}{J_{4}^4J_{16}^5}+\frac{1}{2}\frac{J_{4}^3}{J_{2}^2}-\frac{1}{2}\frac{J_{2}^8J_{8}^2}{J_{1}^4J_{4}^5}
+\frac{J_{2}^6J_{4}^5}{J_{1}^8J_{8}^2}, \label{N08-8n3} \\
\sum_{n=0}^{\infty}N^{0}(0,8;8n+5)q^n=&2\frac{J_{4}^9}{J_{1}^6J_{8}^2}, \label{N08-8n5} \\
\sum_{n=0}^{\infty}N^{0}(0,8;8n+7)q^n=&2\frac{J_{2}^8J_{8}^2}{J_{1}^8J_{4}}. \label{N08-8n7}
\end{align}
\item
\begin{align}
\sum_{n=0}^{\infty}N^{0}(1,8;2n)q^n=&-\frac{1}{4}+\frac{1}{4}g(-1;q^4)+\frac{1}{2}q^4g(q^4;q^{16})
+\frac{1}{8}\frac{J_{2}^6J_{4}}{J_{1}^4J_{8}^2}\nonumber \\
& -\frac{1}{4}\frac{J_{8}^7}{J_{4}^3J_{16}^3}+\frac{1}{4}\frac{J_{2}^5J_{8}^2}{J_{1}^2J_{4}^3J_{16}}, \label{N18-2n} \\
\sum_{n=0}^{\infty}N^{0}(1,8;4n)q^n=&-\frac{1}{4}+\frac{1}{4}g(-1;q^2)+\frac{1}{2}q^2g(q^2;q^8)+\frac{1}{8}\frac{J_{2}^{15}}{J_{1}^8J_{4}^6}, \label{N18-4n} \\
\sum_{n=0}^{\infty}N^{0}(1,8;4n+2)q^n=&\frac{1}{2}\left(\frac{J_{2}^3J_{4}^2}{J_{1}^4}+\frac{J_4J_8}{J_2} \right), \label{N18-4n2} \\
\sum_{n=0}^{\infty}N^{0}(1,8;8n)q^n=&-\frac{1}{4}+\frac{1}{4}g(-1;q)+\frac{1}{2}qg(q;q^4)+\frac{1}{8}\frac{J_{2}^{22}}{J_{1}^{13}J_{4}^8}+2q\frac{J_{4}^8}{J_{1}^5J_{2}^2}, \label{N18-8n} \\
\sum_{n=0}^{\infty}N^{0}(1,8;8n+2)q^n=&\frac{1}{2}\left(\frac{J_{2}^{16}}{J_{1}^{11}J_{4}^4}+\frac{J_2J_4}{J_1} \right), \label{N18-8n2}  \\ \sum_{n=0}^{\infty}N^{0}(1,8;8n+4)q^n=&\frac{J_{2}^{10}}{J_{1}^9}, \label{N18-8n4} \\
\sum_{n=0}^{\infty}N^{0}(1,8;8n+6)q^n=&2\frac{J_{2}^4J_{4}^4}{J_{1}^7}. \label{N18-8n6}
\end{align}
\item
\begin{align}
\sum_{n=0}^{\infty}N^{0}(2,8;2n+1)q^n=&-\frac{1}{2}q^{-1}+\frac{1}{2}q^{-1}g(-1;q^{16})+\frac{1}{4}q^{-1}\frac{J_{8}^{6}J_{16}}{J_{4}^4J_{32}^2}\nonumber\\
&+\frac{1}{4}\frac{J_{2}^8J_{8}^2}{J_{4}^5J_{1}^4}-\frac{1}{4}\frac{J_{4}^3}{J_{2}^2}, \label{N28-2n1}\\
\sum_{n=0}^{\infty}N^{0}(2,8;4n+1)q^n=&q\frac{J_{2}^4J_{16}^2}{J_{1}^4J_{8}}, \label{N28-4n1} \\
\sum_{n=0}^{\infty}N^{0}(2,8;4n+3)q^n=&-\frac{1}{2}q^{-1}+\frac{1}{2}q^{-1}g(-1;q^8)+\frac{1}{4}q^{-1}\frac{J_{4}^6J_{8}}{J_{2}^4J_{16}^2}+\frac{J_{4}^6}{J_{1}^2J_{2}^3}, \label{N28-4n3} \\
\sum_{n=0}^{\infty}N^{0}(2,8;8n+1)q^n=&4q\frac{J_{2}^2J_{4}^3J_{8}^2}{J_{1}^6}, \label{N28-8n1} \\
\sum_{n=0}^{\infty}N^{0}(2,8;8n+3)q^n=&\frac{J_{2}^6J_{4}^5}{J_{1}^8J_{8}^2}, \label{N28-8n3} \\
\sum_{n=0}^{\infty}N^{0}(2,8;8n+5)q^n=&\frac{J_{2}^{14}J_{8}^2}{J_{1}^{10}J_{4}^5}, \label{N28-8n5} \\
\sum_{n=0}^{\infty}N^{0}(2,8;8n+7)q^n=&-\frac{1}{2}q^{-1}+\frac{1}{2}q^{-1}g(-1;q^4)+\frac{1}{4}q^{-1}\frac{J_{2}^6J_{4}}{J_{1}^4J_{8}^2}+2\frac{J_{2}^8J_{8}^2}{J_{1}^8J_{4}}. \label{N28-8n7}
\end{align}
\item
\begin{align}
\sum_{n=0}^{\infty}N^{0}(3,8;2n)q^n=&-\frac{1}{4}+\frac{1}{4}g(-1;q^4)-\frac{1}{2}q^4g(q^4;q^{16})+\frac{1}{4}\frac{J_{8}^7}{J_{4}^3J_{16}^3}\nonumber \\
&-\frac{1}{4}\frac{J_{2}^5J_{8}^2}{J_{1}^2J_{4}^3J_{16}}+\frac{1}{8}\frac{J_{2}^6J_{4}}{J_{1}^4J_{8}^2}, \label{N38-2n}\\
\sum_{n=0}^{\infty}N^{0}(3,8;4n)q^n=&-\frac{1}{4}+\frac{1}{4}g(-1;q^2)-\frac{1}{2}q^2g(q^2;q^8)+\frac{1}{8}\frac{J_{2}^{15}}{J_{1}^8J_{4}^6}, \label{N38-4n} \\
\sum_{n=0}^{\infty}N^{0}(3,8;4n+2)q^n=&\frac{1}{2}\left(\frac{J_{2}^3J_{4}^2}{J_{1}^4}-\frac{J_4J_8}{J_2} \right), \label{N38-4n2} \\
\sum_{n=0}^{\infty}N^{0}(3,8;8n)q^n=&-\frac{1}{4}+\frac{1}{4}g(-1;q)-\frac{1}{2}qg(q;q^4)+\frac{1}{8}\frac{J_{2}^{22}}{J_{1}^{13}J_{4}^8}+2q\frac{J_{4}^8}{J_{1}^5J_{2}^2}, \label{N38-8n} \\
\sum_{n=0}^{\infty}N^{0}(3,8;8n+2)q^n=&\frac{1}{2}\left(\frac{J_{2}^{16}}{J_{1}^{11}J_{4}^4}-\frac{J_2J_4}{J_1} \right), \label{N38-8n2} \\
\sum_{n=0}^{\infty}N^{0}(3,8;8n+4)q^n=&\frac{J_{2}^{10}}{J_{1}^9}, \label{N38-8n4} \\
\sum_{n=0}^{\infty}N^{0}(3,8;8n+6)q^n=&2\frac{J_{2}^4J_{4}^4}{J_{1}^7}. \label{N38-8n6}
\end{align}
\item
\begin{align}
\sum_{n=0}^{\infty}N^{0}(4,8;2n+1)q^n=&q^{-3}-q^{-3}g(-1;q^{64})-\frac{1}{2}q^{-3}\frac{J_{32}^6J_{64}}{J_{16}^4J_{128}^2} \nonumber \\
& +\frac{1}{4}\left(\frac{J_{2}^8J_{8}^2}{J_{1}^4J_{4}^5}-2\frac{J_{2}^5J_{16}}{J_{1}^2J_{4}^2J_{8}}+\frac{J_{4}^3}{J_{2}^2} \right)+\frac{1}{2}q\left( \frac{J_{16}^3}{J_{8}^2}-\frac{J_{8}^8J_{32}^2}{J_{4}^4J_{16}^5}\right), \label{N48-2n1} \\
\sum_{n=0}^{\infty}N^{0}(4,8;4n+1)q^n=&\frac{1}{4}\left(\frac{J_{2}^9}{J_{1}^6J_{4}^2}-2\frac{J_{4}^4}{J_{2}^2J_{8}}+\frac{J_{2}^3}{J_{1}^2} \right), \label{N48-4n1} \\
\sum_{n=0}^{\infty}N^{0}(4,8;4n+3)q^n=&q^{-2}-q^{-2}g(-1;q^{32})-\frac{1}{2}q^{-2}\frac{J_{16}^6J_{32}}{J_{8}^4J_{64}^2}\nonumber \\
& -\frac{1}{2}\frac{J_{8}^3}{J_{4}^2}-\frac{1}{2}\frac{J_{4}^8J_{16}^2}{J_{2}^4J_{8}^5}+\frac{J_{4}^6}{J_{1}^2J_{2}^3}, \label{N48-4n3} \\
\sum_{n=0}^{\infty}N^{0}(4,8;8n+1)q^n=&\frac{1}{4}\frac{J_{2}^{12}J_{4}}{J_{1}^{10}J_{8}^2}+2q\frac{J_{2}^2J_{4}^3J_{8}^2}{J_{1}^6}-\frac{1}{2}\frac{J_{2}^4}{J_{1}^2J_{4}}
+\frac{1}{4}\frac{J_{4}^5}{J_{1}^2J_{8}^2}, \label{N48-8n1} \\
\sum_{n=0}^{\infty}N^{0}(4,8;8n+3)q^n=&q^{-1}-q^{-1}g(-1;q^{16})-\frac{1}{2}q^{-1}\frac{J_{8}^6J_{16}}{J_{4}^4J_{32}^2}-\frac{1}{2}\frac{J_{4}^3}{J_{2}^2}\nonumber \\
&-\frac{1}{2}\frac{J_{2}^8J_{8}^2}{J_{1}^4J_{4}^5}+\frac{J_{2}^6J_{4}^5}{J_{1}^8J_{8}^2}, \label{N48-8n3} \\
\sum_{n=0}^{\infty}N^{0}(4,8;8n+5)q^n=&2\frac{J_{4}^9}{J_{1}^6J_{8}^2}, \label{N48-8n5}\\
\sum_{n=0}^{\infty}N^{0}(4,8;8n+7)q^n=&2\frac{J_{2}^8J_{8}^2}{J_{1}^8J_{4}}.\label{N48-8n7}
\end{align}
\end{enumerate}
\end{theorem}
\begin{proof}
We only give proofs to part (1). Parts (2)-(5) can be proved similarly.

Setting $(a,M)=(0,8)$ in \eqref{start}, we obtain
\begin{align}
\sum_{n=0}^{\infty}N^{0}(0,8;n)q^n=&\frac{1}{8}q\Big(\left(g(q;q^2)+g(-q;q^2) \right)+\left(g(\zeta_8q;q^2)+g(-\zeta_8q;q^2) \right)\nonumber \\
& +\left(g(iq;q^2)+g(-iq;q^2) \right)+\left(g(i\zeta_8q;q^2)+g(-i\zeta_8q;q^2) \right) \Big).
\end{align}
Substituting \eqref{q-sum}, \eqref{iq-sum}, \eqref{zetaq-sum}, and \eqref{izetaq-sum} into the above identity, we deduce that
\begin{align}\label{N08-2nd}
\sum_{n=0}^{\infty}N^{0}(0,8;n)q^n=&-\frac{q^3}{4}\left(\left(g(q^4;q^8)+g(-q^4;q^8) \right)+\left(g(iq^4;q^8)+g(-iq^4;q^8) \right) \right)\nonumber \\
&+\frac{1}{4}q\left(\frac{J_{4}^8J_{16}^2}{J_{2}^4J_{8}^5}+\frac{J_{8}^3}{J_{4}^2}+2\frac{J_{4}^5J_{32}}{J_{2}^2J_{8}^2J_{16}} \right).
\end{align}
Replacing $q$ by $q^4$ in \eqref{q-sum} and \eqref{iq-sum} and substituting them into \eqref{N08-2nd}, then using \eqref{q-sum} with $q$ replaced by $q^{16}$, we arrive at
\begin{align}
\sum_{n=0}^{\infty}N^{0}(0,8;n)q^n=&-\frac{1}{4}q^3\left(4q^{40}g(-q^{64};q^{128})-4q^8\frac{J_{64}^8J_{256}^2}{J_{32}^4J_{128}^5} \right)-\frac{1}{2}q^3\left(\frac{J_{16}^8J_{64}^2}{J_{8}^4J_{32}^5}+\frac{J_{32}^3}{J_{16}^2}\right) \nonumber \\
&+\frac{1}{4}q\left(\frac{J_{4}^8J_{16}^2}{J_{2}^4J_{8}^5}+\frac{J_{8}^3}{J_{4}^2}+2\frac{J_{4}^5J_{32}}{J_{2}^2J_{8}^2J_{16}}  \right).
\end{align}
Since $N^{0}(0,8;2n)=0$, this identity reduces to \eqref{N08-2n1}.

Substituting \eqref{J-2} and \eqref{J-4} into \eqref{N08-2n1}, then extracting the even (resp.\ odd) power terms, we prove \eqref{N08-4n1} (resp.\ \eqref{N08-4n3}).

Next, we substitute  \eqref{J-2} and \eqref{J-6}  into \eqref{N08-4n1}. If we extract the even  power terms, we  get \eqref{N08-8n1} immediately. If we extract the odd power terms, then dividing by $q$ and replacing $q^2$ by $q$, we obtain
\begin{align}\label{revise-proof-1}
\sum_{n=0}^\infty N^{0}(0,8;8n+5)q^n=\frac{1}{2}\left(\frac{J_{2}^2J_{8}^2}{J_{1}^2J_{4}}+\frac{J_{2}^{14}J_{8}^2}{J_{1}^{10}J_{4}^5}+2\frac{J_{4}^9}{J_{1}^6J_{8}^2} \right).
\end{align}
Now we prove that
\begin{align}\label{revise-proof-2}
\frac{J_{2}^2J_{8}^2}{J_{1}^2J_{4}}+\frac{J_{2}^{14}J_{8}^2}{J_{1}^{10}J_{4}^5}=2\frac{J_{4}^9}{J_{1}^6J_{8}^2}.
\end{align}
In fact, using \eqref{J4} and \eqref{J-4} we deduce that
\begin{align}
&\frac{J_{2}^2J_{8}^2}{J_4}J_{1}^4+\frac{J_{2}^{14}J_{8}^2}{J_{1}^4J_{4}^5} \nonumber \\
=&~ \frac{J_{2}^2J_{8}^2}{J_{4}}\left(\frac{J_{4}^{10}}{J_{2}^2J_{8}^4}-4q\frac{J_{2}^2J_{8}^4}{J_{4}^2} \right) +\frac{J_{2}^{14}J_{8}^2}{J_{4}^5}\left(\frac{J_{4}^{14}}{J_{2}^{14}J_{8}^4}+4q\frac{J_{4}^2J_{8}^4}{J_{2}^{10}} \right) \nonumber \\
=&~ 2\frac{J_{4}^9}{J_{8}^2}. \label{revise-proof-3}
\end{align}
Dividing both sides of \eqref{revise-proof-3} by $J_{1}^6$, we obtain \eqref{revise-proof-2}. Substituting \eqref{revise-proof-2} into \eqref{revise-proof-1}, we obtain  \eqref{N08-8n5}.

Finally, substituting \eqref{J-2} into \eqref{N08-4n3}, then extracting the even (resp.\ odd) power terms, we obtain \eqref{N08-8n3} (resp.\ \eqref{N08-8n7}).
\end{proof}
\begin{rem}
All the identities in part (3) of this theorem follow directly from Theorem \ref{thm-mod4} (3) upon using the relation in \eqref{add-N24-N28-N68}. This is similar to the proof of Theorem \ref{thm-mod4} (2).
\end{rem}

We may further give generating functions for $N^{0}(a,M;16n+r)$ for some $0\le a \le M-1$, $0\le r \le 15$ and $M\in \{2,4,8\}$. We give some examples in the following corollary. The proof is straightforward and we omit it.
\begin{corollary}
We have
\begin{align}
&\sum_{n=0}^\infty N^{0}(0,4;16n+5)q^n=4\frac{J_{2}^{23}}{J_{1}^{19}J_{4}J_{8}^2}+32q\frac{J_{2}^{13}J_{4}J_{8}^2}{J_{1}^{15}}, \label{N04-16n5} \\
&\sum_{n=0}^\infty N^{0}(0,4;16n+7)q^n=4\frac{J_{2}^{27}}{J_{1}^{20}J_{4}^6}+64q\frac{J_{2}^3J_{4}^{10}}{J_{1}^{12}}, \label{N04-16n7} \\
&\sum_{n=0}^\infty N^{0}(0,4;16n+13)q^n=8\frac{J_{2}^{25}J_{8}^2}{J_{1}^{19}J_{4}^7}+16\frac{J_{2}^{11}J_{4}^7}{J_{1}^{15}J_{8}^2}, \label{N04-16n13} \\
&\sum_{n=0}^\infty N^{0}(0,4;16n+15)q^n=32\frac{J_{2}^{15}J_{4}^2}{J_{1}^{16}}, \label{N04-16n15} \\
&\sum_{n=0}^\infty N^{0}(2,4;16n+1)q^n=16q\left(\frac{J_{2}^{19}J_{8}^2}{J_{1}^{17}J_{4}^3}+2\frac{J_{2}^5J_{4}^{11}}{J_{1}^{13}J_{8}^2}  \right), \label{N24-16n1} \\
&\sum_{n=0}^\infty N^{0}(2,4;16n+3)q^n=2\frac{J_{2}^{33}}{J_{1}^{22}J_{4}^{10}}+32q\frac{J_{2}^9J_{4}^6}{J_{1}^{14}}, \label{N24-16n3} \\
&\sum_{n=0}^\infty N^{0}(2,4;16n+9)q^n=8\frac{J_{4}^3J_{2}^{17}}{J_{1}^{17}J_{8}^2}+64q\frac{J_{2}^7J_{4}^5J_{8}^2}{J_{1}^{13}}, \label{N24-16n9} \\
&\sum_{n=0}^\infty N^{0}(2,4;16n+11)q^n=16\frac{J_{2}^{21}}{J_{1}^{18}J_{4}^2}. \label{N24-16n11}
\end{align}
\end{corollary}
%
\begin{rem}
Using \eqref{N08-48-8n57} and \eqref{N04-08-8n57}, identities \eqref{N04-16n5}--\eqref{N04-16n15} also imply similar formulas for $N^{0}(a,8;16n+r)$ with $a\in \{0,4\}$ and $r\in \{5,7,13,15\}$. Similarly, using \eqref{add-N24-N28-N68}, we can also get identities for $N^{0}(a,8;16n+r)$ with $a\in \{2,6\}$ and $r\in \{1,3,9,11\}$ from  identities \eqref{N24-16n1}--\eqref{N24-16n11}.
\end{rem}

\section{Proofs of the Theorems and Andrews' Conjectures}\label{sec-arith}

\begin{proof}[Proof of Theorem \ref{N-relation}]
Comparing \eqref{N08-8n5} with \eqref{N48-8n5}, we obtain the case $r=5$ of \eqref{N08-48-8n57}. Similarly, comparing \eqref{N08-8n7} with \eqref{N48-8n7}, we obtain the case $r=7$ of \eqref{N08-48-8n57}. The equalities in \eqref{N18-38-8n46} can be proved by comparing \eqref{N18-8n4} with \eqref{N38-8n4} and \eqref{N18-8n6} with \eqref{N38-8n6}.
\end{proof}

\begin{proof}[Proof of Corollary \ref{cor-N-relation}]
By \eqref{N08-48-8n57} we deduce that
\begin{align}
N^{0}(0,4;8n+5)=N^{0}(0,8;8n+5)+N^{0}(4,8;8n+5)=2N^{0}(0,8;8n+5).
\end{align}
This proves the case $r=5$ of \eqref{N04-08-8n57}. The rest of the equalities can be proved in a similar fashion.
\end{proof}
\begin{rem}
We can also prove Corollary \ref{cor-N-relation} without using Theorem \ref{N-relation}. Indeed, comparing \eqref{N04-8n5} with \eqref{N08-8n5}, we obtain the case $r=5$ of \eqref{N04-08-8n57} immediately. In the same way we can prove other equalities.
\end{rem}

\begin{proof}[Proof of Theorem \ref{rank-differences}]
(1) Subtracting \eqref{N48-8n1} from \eqref{N08-8n1}, we get \eqref{N08-48-8n1-diff}. Similarly, subtracting \eqref{N38-8n} from \eqref{N18-8n} we get \eqref{N18-38-8n-diff}, and subtracting \eqref{N38-8n2} from \eqref{N18-8n2} we get \eqref{N18-38-8n2-diff}.

The proof of \eqref{N08-48-8n3-diff} requires more tricks. Subtracting \eqref{N48-8n3} from \eqref{N08-8n3} we obtain
\begin{align}
&\sum_{n=0}^{\infty}\left(N^{0}(0,8;8n+3)-N^{0}(4,8;8n+3) \right)q^n \nonumber \\
=&-q^{-1}+q^{-1}g(-1;q^{16})-q^5g(-q^8;q^{16})+\frac{1}{2}q^{-1}\frac{J_{8}^6J_{16}}{J_{4}^4J_{32}^2}+\frac{J_{4}^3}{J_{2}^2}+q\frac{J_{8}^8J_{32}^2}{J_{4}^4J_{16}^5}.  \label{g-add-0} \end{align}
To simplify the right side of the above identity, we invoke Lemma \ref{g-lemma}. Replacing $q$ by $q^2$ and setting $x=q$ in \eqref{g-transform}, after rearrangement, we obtain
\begin{align}
g(-1;q^8)-q^3g(-q^4;q^8)=1+qg(q;q^2)-\frac{1}{2}\frac{J_{2}J_{4}^6}{J_{1}^2J_{8}^4}. \label{g-add-1}
\end{align}
Replacing $q$ by $q^2$ in \eqref{g-add-1} and then substituting it into \eqref{g-add-0}, after rearrangement, we get
\begin{align}
&\sum_{n=0}^{\infty}\left(N^{0}(0,8;8n+3)-N^{0}(4,8;8n+3) \right)q^n \nonumber \\
=&\frac{1}{2}q^{-1}\left(\frac{J_{8}^6J_{16}}{J_{4}^4J_{32}^2}-\frac{J_{4}J_{8}^6}{J_{2}^2J_{16}^4} \right)+\frac{J_{4}^3}{J_{2}^2}+q\frac{J_{8}^8J_{32}^2}{J_{4}^4J_{16}^5}+qg(q^2;q^4), \nonumber \\
=&\frac{1}{2}q^{-1}\left(\frac{J_{8}^6J_{16}}{J_{4}^4J_{32}^2}-\frac{J_{4}J_{8}^6}{J_{16}^4}\left(\frac{J_{16}^5}{J_{4}^5J_{32}^2}+2q^2\frac{J_{8}^2J_{32}^2}{J_{4}^5J_{16}} \right) \right)+\frac{J_{4}^3}{J_{2}^2}+q\frac{J_{8}^8J_{32}^2}{J_{4}^4J_{16}^5}+qg(q^2;q^4) \nonumber \\
=&\frac{J_{4}^3}{J_{2}^2}+qg(q^2;q^4).
 \label{g-add-2}
\end{align}
where in the last second equality we have used \eqref{J-2} with $q$ replaced by $q^2$.
\end{proof}
\begin{proof}[Proof of Corollary \ref{N-p-omega}]
Extracting the odd power terms in \eqref{N08-48-8n3-diff}, we obtain
\begin{align}\label{N08-48-16n15-diff}
\sum_{n=0}^{\infty}\left(N^{0}(0,8;16n+11)-N^{0}(4,8;16n+11) \right)q^n=g(q;q^2).
\end{align}
From \eqref{omega-defn}, \eqref{Fine-formula} and \eqref{g-exp} we know that
\begin{align}\label{p-omega-g}
\sum_{n=0}^{\infty}p_{\omega}(n)=qg(q;q^2).
\end{align}
Comparing \eqref{N08-48-16n15-diff} with \eqref{p-omega-g}, we obtain \eqref{N-pomega-16n5}.
\end{proof}

\begin{proof}[Proof of Theorem \ref{rank-inequality}]
For two series
\begin{align*}
A_1(q):=\sum_{n=-\infty}^{\infty}a_1(n)q^n \quad \textrm{and} \quad A_2(q):=\sum_{n=-\infty}^{\infty}a_{2}(n)q^n,
\end{align*}
we say that $A_{1}(q)\succeq A_{2}(q)$ if $a_{1}(n)\ge a_{2}(n)$ holds for any integer $n$. For example, we have
\begin{align}
\psi^2(q)=\left(1+q+q^3+q^6+\cdots \right)^2\succeq 1+q+q^2+q^3.
\end{align}

We have
\begin{align}
\frac{J_{2}^4}{J_{1}^2J_{4}}=\frac{\psi^2(q)}{J_{4}}\succeq \frac{1+q+q^2+q^3}{1-q^4}=\frac{1}{1-q}=\sum_{n=0}^{\infty}q^{n}.
\end{align}
Hence from \eqref{N08-48-8n1-diff} we deduce that for any $n\ge 0$,
\begin{align}\label{N08-48-8n1-ineq}
N^{0}(0,8;8n+1)>N^{0}(4,8;8n+1).
\end{align}

Similarly, we have
\begin{align}
\frac{J_{2}^3}{J_{1}^2}=\frac{\psi(q)}{(q;q^2)_{\infty}}\succeq \frac{1}{1-q}=\sum_{n=0}^{\infty}q^n.
\end{align}
From \eqref{g-exp} we have
\begin{align}
g(q;q^2)=\sum_{n=0}^{\infty}\frac{q^{2n(n+1)}}{(q;q^2)_{n+1}^{2}}\succeq \frac{1}{1-q}=\sum_{n=0}^{\infty}q^n.
\end{align}
Thus by \eqref{N08-48-8n3-diff} we deduce that for any $n\ge 0$,
\begin{align}\label{N08-48-8n3-ineq}
N^{0}(0,8;8n+3)>N^{0}(4,8;8n+3).
\end{align}
Combining \eqref{N08-48-8n1-ineq} with \eqref{N08-48-8n3-ineq}, we obtain \eqref{N08-48-8n13-ineq}.

By \eqref{g-exp} we have
\begin{align}
qg(q;q^4)=q\sum_{n=0}^{\infty}\frac{q^{4n(n+1)}}{(q;q^4)_{n+1}(q^3;q^4)_{n+1}}\succeq \frac{q}{1-q}=\sum_{n=1}^{\infty}q^n,
\end{align}
which together with \eqref{N18-38-8n-diff} implies that for any $n\ge 1$,
\begin{align}\label{N18-38-8n-ineq}
N^{0}(1,8;8n)>N^{0}(3,8;8n).
\end{align}

Next, we observe that
\begin{align}
\frac{J_{2}J_{4}}{J_{1}}=\frac{\psi(q)}{(q^2;q^4)}\succeq \frac{1+q}{1-q^2}=\sum_{n=0}^{\infty}q^n.
\end{align}
This together with \eqref{N18-38-8n2-diff} implies that for any $n\ge 0$,
\begin{align}\label{N18-38-8n2-ineq}
N^{0}(1,8;8n+2)>N^{0}(3,8;8n+2).
\end{align}
Combining \eqref{N18-38-8n-ineq} with \eqref{N18-38-8n2-ineq}, we obtain \eqref{N18-38-8n02-ineq}.
\end{proof}

\begin{proof}[Proof of Theorem \ref{cong-thm}]
Since $N^{0}(-m,n)=N^{0}(m,n)$, we deduce that
\begin{align*}
N^{0}(k,2k;n)=\sum_{m=-\infty}^{\infty}N^{0}(2km+k,n)=2\sum_{m=0}^{\infty}N^{0}(2km+k,n).
\end{align*}
This proves \eqref{Nk-2k-cong}.

From \eqref{N02-8n1} and the binomial theorem we have
\begin{align}
\sum_{n=0}^\infty N^{0}(0,2;8n+1)q^n\equiv \frac{J_{2}^{12}J_{4}}{J_{2}^5J_{8}^2} \pmod{2},
\end{align}
which yields \eqref{N02-16n9-cong}.

Congruence \eqref{N12-8n4-cong} (resp.\ \eqref{N12-8n6-cong}) follows from \eqref{N12-8n4} (resp.\ \eqref{N12-8n6}).

From \eqref{N12-8n4} and the binomial theorem, we deduce that
\begin{align}\label{N12-mod5-cong-proof}
\sum_{n=0}^{\infty}N^{0}(1,2;8n+4)q^n\equiv 4\frac{J_{10}^2J_{1}}{J_{5}^2}\pmod{5}.
\end{align}
By Euler's pentagonal number theorem, we have
\begin{align}
J_{1}=\sum_{n=-\infty}^{\infty}(-1)^nq^{n(3n+1)/2}.
\end{align}
For any integer $n$, the residue of $\frac{n(3n+1)}{2}$ modulo 5 can only be 0, 1 or 2. Using this fact, \eqref{N12-mod5-cong}  follows from \eqref{N12-mod5-cong-proof}.

Congruence \eqref{N04-8n57-cong} follows from \eqref{N04-8n5} and \eqref{N04-8n7}. Congruences \eqref{N04-16n13-cong} and \eqref{N04-16n15-cong} follow from \eqref{N04-16n13} and \eqref{N04-16n15}, respectively.

From \eqref{N04-16n7} and the binomial theorem, we have
\begin{align}
\sum_{n=0}^\infty N^{0}(0,4;16n+7)q^n\equiv 4\frac{J_{2}^{27}}{J_{2}^{10}J_{4}^6} \pmod{16},
\end{align}
which yields \eqref{N04-32n23-cong}. Similarly, using the binomial theorem we have $J_{1}^{16}\equiv J_{2}^8$ (mod 16). Hence  \eqref{N04-16n15} implies
\begin{align}
\sum_{n=0}^\infty N^{0}(0,4;16n+15)q^n\equiv 32J_{2}^7J_{4}^2 \pmod{512},
\end{align}
which yields \eqref{N04-32n31-cong}.

Congruences \eqref{N24-16n1-cong}, \eqref{N24-16n9-cong} and \eqref{N24-16n11-cong} follow from \eqref{N24-16n1}, \eqref{N24-16n9} and \eqref{N24-16n11}, respectively.

Finally, congruence \eqref{N24-16n13-cong} follows from \eqref{N24-8n5} and the binomial theorem.
\end{proof}
\begin{rem}
As mentioned in the introduction, we can prove some congruences in the literature using Theorems \ref{N-relation} and \ref{cong-thm}. For example,
since $N^{0}(0,2;2n)=0$ and $N^{0}(1,4;2n)=N^{0}(3,4;2n)$, we have
\begin{align}\label{fact}
p_{\omega}(2n)=N^{0}(1,2;2n)=2N^{0}(1,4;2n).
\end{align}
From the case $r=4$ of \eqref{N14-18-8n46} we deduce that
\begin{align}\label{revise-add-1}
p_{\omega}(8n+4)=2N^{0}(1,4;8n+4)=4N^{0}(1,8;8n+4).
\end{align}
This gives a combinatorial interpretation to \eqref{p-omega-1}. Similarly, from the case $r=6$ of \eqref{N14-18-8n46} we have
\begin{align}
p_{\omega}(8n+6)=4N^{0}(1,8;8n+6).
\end{align}
By \eqref{N18-8n6} we immediately get \eqref{p-omega-2}. In the same way, using \eqref{N04-16n13-cong}, \eqref{N24-16n13-cong} and the fact that
\begin{align*}
p_{\omega}(16n+13)=N^{0}(0,4;16n+13)+N^{0}(2,4;16n+13),
\end{align*}
we can prove \eqref{p-omega-3}.

A stronger form of Waldherr's congruence \eqref{p-omega-mod5}, namely,
\begin{align}
p_{\omega}(40n+r)\equiv 0 \pmod{20}, \quad r\in \{28, 36\},
\end{align}
can also be deduced from \eqref{N14-mod5-cong}  in view of \eqref{revise-add-1}.
\end{rem}

Finally, we present proofs for Andrews' conjectures in \cite{Andrews}.
\begin{proof}[Proof of Conjectures \ref{conj-1} and \ref{conj-2}]
From \eqref{D-eta} and \eqref{eta-defn} we deduce that
\begin{align*}
\mathcal{D}_{2}^{0}(n)=\eta_{2}^{0}(n)=\sum_{m=-\infty}^{\infty}\binom{m+1}{2}N^{0}(m,n)\equiv \sum_{\begin{smallmatrix}m=-\infty \\ m\equiv 1,2 \pmod{4}\end{smallmatrix}}^{\infty}N^{0}(m,n) \pmod{2}.
\end{align*}
This implies
\begin{align*}
\mathcal{D}_{2}^{0}(n)\equiv N^{0}(1,4;n)+N^{0}(2,4;n) \pmod{2}.
\end{align*}
Therefore, we have $\mathcal{D}_{2}^{0}(2n)\equiv N^{0}(1,4;2n)$ (mod 2). From \eqref{N14-8n4-cong} and \eqref{N14-8n6-cong} we complete the proof of  Conjecture \ref{conj-1}.

Similarly, we have
\begin{align}
\mathcal{D}_{3}^{0}(n)=\eta_{4}^{0}(n)=&\sum_{m=-\infty}^{\infty}\binom{m+2}{4}N^{0}(m,n)\nonumber \\
\equiv &\sum_{\begin{smallmatrix}m=-\infty \\ m\equiv 2,3,4,5 \pmod{8}\end{smallmatrix}}^{\infty}N^{0}(m,n) \pmod{2}.
\end{align}
Hence
\begin{align*}
\mathcal{D}_{3}^{0}(n)\equiv N^{0}(2,8;n)+N^{0}(3,8;n)+N^{0}(4,8;n)+N^{0}(5,8;n) \equiv N^{0}(2,8;n) \pmod{2},
\end{align*}
where we used the facts that $N^{0}(3,8;n)=N^{0}(5,8;n)$ and $N^{0}(4,8;n)\equiv 0$ (mod 2) by \eqref{Nk-2k-cong}.  Now by \eqref{add-N24-N28-N68} we have $N^{0}(2,8;n)=\frac{1}{2}N^{0}(2,4;n)$. By congruences \eqref{N24-16n1-cong}, \eqref{N24-16n9-cong}, \eqref{N24-16n11-cong} and \eqref{N24-16n13-cong},  we complete the proof of Conjecture \ref{conj-2}.
\end{proof}


\subsection*{Acknowledgements}
The author was supported by the National Natural Science Foundation of China (11801424), ``the Fundamental Research Funds for the Central Universities'' (Project No.\ 2042018kf0027) and a start-up research grant  of the Wuhan University. Part of this work was done during the author's stay at the Nanyang Technological University. The author would like to thank Prof. Song Heng Chan and NTU for their support.

\end{document}